\documentclass[]{amsart}
\usepackage[latin1]{inputenc}
\usepackage[english]{babel}
\usepackage{amsmath}
\usepackage{amsthm}
\usepackage{amssymb}
\usepackage{amsmath, amsthm, amsfonts, mathrsfs, amsfonts, mathtools}
\usepackage{latexsym}
\usepackage{textcomp}
\usepackage{enumerate}
\usepackage{ulem}

\usepackage{xcolor}

\theoremstyle{definition}
\newtheorem{definition}{Definition}[section]

\theoremstyle{remark}
\newtheorem{remark}[definition]{Remark}

\theoremstyle{plain}

\theoremstyle{plain}
\newtheorem{lemma}[definition]{Lemma}

\theoremstyle{plain}
\newtheorem{theorem}[definition]{Theorem}
\newtheorem{proposition}[definition]{Proposition}

\theoremstyle{plain}
\newtheorem*{conj}{Conjecture}

\newtheorem{theoremA}[]{Theorem}

\theoremstyle{plain}
\newtheorem*{theoremB}{Theorem}

\theoremstyle{plain}
\newtheorem{hypothesis}[definition]{Hypothesis}

\newcommand{\C}{\mathrm{C}}

\newcommand{\F}{\mathcal{F}}

\newcommand{\N}{\mathrm{N}}

\newcommand{\X}{\mathcal{X}} 
\newcommand{\orb}{\mathcal{O}}

\newcommand{\Aut}{\mathrm{Aut}}
\newcommand{\Hom}{\mathrm{Hom}}

\newcommand{\Out}{\mathrm{Out}}
\newcommand{\Inn}{\mathrm{Inn}}

\newcommand{\Iso}{\mathrm{Iso}}

\newcommand{\lmod}{{\operatorname{\mathbf{-mod}}}}
\newcommand{\Res}{{\rm Res}}
\newcommand{\Ind}{{\rm Ind}}
\newcommand{\tr}{{\rm tr}} 
\newcommand{\cj}[2]{\prescript{#1}{}{#2}} 
\newcommand{\lqt}{\backslash} 

\newcommand{\norm}{\mathrel{\unlhd}}

\def \L {\textbf{cc}}

\def \Z {\mathrm {Z}}

\def \iso {{\rm iso}}

\oddsidemargin 0.1in \evensidemargin 0.1in \textwidth=6.1in
\textheight=8.5in \itemsep=0in
\parsep=0.1in

\begin{document}



\title{Sharpness of saturated fusion systems on a Sylow $p$-subgroup of ${\rm G}_2(p)$}



%
%


\author[V. Grazian]{Valentina Grazian}
\email{valentina.grazian@unimib.it}
\address{
Department of Mathematics and Applications,
University of Milano -- Bicocca,
Via Roberto Cozzi 55, 20125 Milano,
Italy} 

\author[E. Marmo]{Ettore Marmo}
 \email{ettore.marmo01@universitadipavia.it}
\address{
Department of Mathematics and Applications,
University of Milano -- Bicocca,
Via Roberto Cozzi 55, 20125 Milano, 
Italy}





\begin{abstract}We prove that the D\'iaz-Park's sharpness conjecture holds for saturated fusion systems defined on a Sylow $p$-subgroup of the group ${\rm G}_2(p)$, for $p\geq 5$.
\end{abstract}





\maketitle


\section{Introduction}

Given a finite group $G$, a homology decomposition of the classifying space $BG$ for $G$ is a process of glueing together classifying spaces of a suitable collection of subgroups of $G$ in order to obtain a space that is isomorphic to $BG$ in mod-$p$ homology. In \cite{Dwyer} Dwyer proved that the collection of $p$-centric subgroups of $G$ gives rise to a \emph{sharp} homology decomposition, yielding a particularly simple formula for the
cohomology of the classifying space $BG$ in terms of the cohomology of the spaces involved in it:

\begin{theoremB}[Subgroup sharpness for $p$-centric subgroups]
Let $p$ be a prime, let $G$ be a finite group and let $\orb_p^c(G)$ denote the $p$-centric orbit category of $G$. Then $H^j(-; \mathbb{F}_p)$ is acyclic as a controvariant functor over  $\orb_p^c(G)$:
\[{\lim\limits_{\longleftarrow}}_{\orb_p^c(G)}^i H^j(-; \mathbb{F}_p) = 0\]
for all  $i\geq 1, j \geq 0.$
\end{theoremB}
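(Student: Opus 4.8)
\emph{Plan.} The statement asserts exactly that the $p$-centric subgroup decomposition of $BG$ is \emph{sharp}. Although phrased topologically, the proof I have in mind is algebraic: a d\'evissage over the orbit category down to higher-limit computations of Jackowski--McClure--Oliver type, which are then evaluated via the Steinberg module. The geometry behind the word ``sharp'' is the following input, which I take as known: writing $\delta\colon\orb_p^c(G)\to\mathbf{Top}$ for the functor $G/P\mapsto EG\times_G(G/P)\simeq BP$, the collection of $p$-centric subgroups is closed under passing to larger $p$-subgroups and hence \emph{ample} (the essential ingredient being the $\mathbb{F}_p$-acyclicity of the orbit space of the poset of nontrivial $p$-subgroups, due to Quillen and in final form to Symonds), so that $\mathrm{hocolim}_{\orb_p^c(G)}\delta\to BG$ is a mod-$p$ homology isomorphism. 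Its Bousfield--Kan spectral sequence reads
\[
E_2^{i,j}={\lim\limits_{\longleftarrow}}^i_{\orb_p^c(G)}H^j(B(-);\mathbb{F}_p)\ \Longrightarrow\ H^{i+j}(BG;\mathbb{F}_p),
\]
with ${\lim\limits_{\longleftarrow}}^0$ recovering $H^j(BG;\mathbb{F}_p)$ as stable elements (Cartan--Eilenberg); the theorem is precisely the vanishing of this $E_2$-page off the column $i=0$, and that is what remains to prove.

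\emph{D\'evissage over conjugacy classes.} First I would filter $\orb_p^c(G)$ by the height function on the poset of $G$-conjugacy classes of $p$-centric subgroups. In the associated filtration spectral sequence of the functor $H^j(B(-);\mathbb{F}_p)$ over this EI-category, the contribution at a class $[P]$ is the higher limit over $\orb_p^c(G)$ of the atomic subquotient functor concentrated at $[P]$. Here $p$-centricity is used in an essential way: it forces $C_G(P)=\Z(P)\times K$ with $|K|$ prime to $p$, so the orbit-category automorphism group $N_G(P)/P$ of $G/P$ acts on $H^j(BP;\mathbb{F}_p)$ through its quotient $\Out_G(P)=N_G(P)/PC_G(P)$, and the standard isotropy computation identifies the atomic contribution with the Jackowski--McClure--Oliver higher-limit functor $\Lambda^\ast\!\bigl(\Out_G(P);H^j(BP;\mathbb{F}_p)\bigr)$. (For a non-$p$-centric $P$ the surplus $p$-torsion in $C_G(P)$ would wreck this description; this is the whole reason for using the centric family.) Thus it suffices to show that each such $\Lambda$-functor vanishes in positive degrees.

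\emph{The crux, and the main obstacle.} This vanishing is where the substantive work lies, and I would follow Jackowski--McClure--Oliver. Write $\Gamma=\Out_G(P)$ and $M=H^j(BP;\mathbb{F}_p)$. By Quillen's detection theorem the canonical map from $M$ to the inverse limit of $H^j(BE;\mathbb{F}_p)$ over the elementary abelian subgroups $E\le P$ has nilpotent kernel and cokernel; filtering $M$ through such elementary-abelian-detected subquotients and feeding this into the long exact sequences for $\Lambda^\ast$ reduces the problem to coefficient modules induced from general linear groups $\GL_n(\mathbb{F}_p)$ acting on elementary abelian $p$-groups. For those, the JMO computation of higher limits over the orbit category --- which turns on the projectivity of the Steinberg module and, behind it, the Solomon--Tits theorem that the Tits building of $\GL_n(\mathbb{F}_p)$ is a wedge of top-dimensional spheres --- gives the vanishing in positive degrees; one uses in addition that $H^j(B(-);\mathbb{F}_p)$ underlies a Mackey functor, so that the relevant transfers tame the remaining top-degree term. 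Reassembling the d\'evissage then yields $\Lambda^i\!\bigl(\Out_G(P);H^j(BP;\mathbb{F}_p)\bigr)=0$ for $i\ge1$, which by the previous paragraph forces the spectral sequence of the first paragraph to be concentrated on the column $i=0$, proving the theorem. I expect the obstacle to be precisely this step --- JMO's higher-limit computation over $\GL_n(\mathbb{F}_p)$ (resting on Solomon--Tits) together with carrying out Quillen's d\'evissage functorially in $P$, compatibly with every morphism of $\orb_p^c(G)$; everything preceding it is formal.
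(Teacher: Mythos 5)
This theorem is quoted in the paper's introduction only as background, attributed to Dwyer via the citation \verb|\cite{Dwyer}|; the paper gives no proof of it, so there is no in-paper argument to compare against. Judged on its own terms, your outline correctly identifies the two structural steps: (a) the $p$-centric collection is ample, so the Bousfield--Kan spectral sequence of the subgroup decomposition converges to $H^*(BG;\mathbb{F}_p)$ with $E_2^{i,j}={\lim}^i H^j$, and sharpness is exactly vanishing off $i=0$; and (b) filtering the EI-category $\orb_p^c(G)$ by conjugacy classes reduces the question to vanishing of the Jackowski--McClure--Oliver $\Lambda^*\bigl(\Out_G(P);H^j(BP;\mathbb{F}_p)\bigr)$, where $p$-centricity is precisely what guarantees $C_G(P)=\Z(P)\times O^{p}(C_G(P))$ and hence that the $\orb_p^c(G)$-automorphisms of $G/P$ act through $\Out_G(P)$. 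These two paragraphs are sound.

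The ``crux'' paragraph, however, is where the argument would actually have to be made, and as written it is a gap rather than a proof. Quillen's $F$-isomorphism theorem is a statement about the graded ring $H^*(BP;\mathbb{F}_p)$, not a filtration of the individual $\mathbb{F}_p\Out_G(P)$-module $H^j(BP;\mathbb{F}_p)$ in a fixed degree $j$; ``filtering $M$ through elementary-abelian-detected subquotients'' is not a construction, and it is unclear that the resulting subquotients are ``modules induced from $\GL_n(\mathbb{F}_p)$'' in a sense to which the Solomon--Tits/Steinberg projectivity argument applies directly. In effect you have reduced Dwyer's theorem to the Jackowski--McClure sharpness result for the elementary-abelian normalizer decomposition, but the reduction itself is asserted rather than carried out, and the functoriality in $P$ along all morphisms of $\orb_p^c(G)$ (which you flag as the obstacle) is indeed the whole difficulty. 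Note also that the argument you want is available in a cleaner form and is the one most relevant to this paper: $H^j(B(-);\mathbb{F}_p)$ is a Mackey functor on $\orb_p(G)$, its restriction to $\orb_p^c(G)$ is a truncated Mackey functor because the composite $\Res^P_{P\cap R}\Ind^R_{P\cap R}$ vanishes when $P\cap R$ is not $p$-centric (a transfer computation), and higher limits of truncated Mackey functors over the centric orbit category vanish. This Mackey-functor route is exactly what D\'iaz--Park abstract and generalize (their Theorem A and Proposition 4.4, quoted here as Proposition~\ref{prop:trunc}), and it avoids the elementary-abelian d\'evissage entirely. Your sketch in fact appeals to the Mackey structure at the very end to ``tame the remaining top-degree term,'' which suggests you already sense that the Mackey property is doing the real work; the cleaner argument simply makes that the main tool rather than an afterthought.
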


Such a context can be extended  to saturated fusion systems $\F$, thanks to the homotopy theory of fusion systems introduced in \cite{BLO2} and the existence of a unique object  that plays the role of the classifying space $B\F$ for $\F$ (\cite{Ch}). In this view, in \cite{DP} D\'iaz and Park stated the following sharpness conjecture for fusion systems, not only passing from finite groups to saturated fusion systems, but also replacing the cohomology functor $H^j(-; \mathbb{F}_p)$ with a more general Mackey functor $M$ (See Definition \ref{def:mackey}):

\begin{conj}[Sharpness for fusion systems]
Let $p$ be a prime, let $\F$ be a saturated fusion system on a finite $p$-group $S$, let $\orb(\F^c)$ denote the centric orbit category of $\F$ and let $M = (M^*, M_*) : \orb(\F) \to \mathbb{F}_p\lmod$
be a Mackey functor over the full orbit category $\orb(\F)$ of $\F$.
Then the contravariant part $M^*$ over $\orb(\F^c)$ is acyclic:
\[{\lim\limits_{\longleftarrow}}_{\orb(\F^c)}^i M^*|_{\orb(\F^c)} = 0\] for all $i > 0$.
\end{conj}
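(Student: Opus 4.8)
The plan is to attack the conjecture by reducing the higher limits ${\lim}^i_{\orb(\F^c)}M^*$ to cohomological data attached to a small, explicit poset of subgroups, and then exploiting the Mackey structure of $M$ to force everything into cohomological degree zero.

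\emph{Step 1: reduce the orbit category.} I would first replace $\orb(\F^c)$ by $\orb(\F^{cr})$, the full subcategory on the $\F$-centric $\F$-radical subgroups. Filtering a contravariant functor on $\orb(\F^c)$ by the $\F$-conjugacy class of its support, the ``atomic'' functor supported at a non-radical class $[P]$ has higher limits governed by $H^{>0}(\Out_\F(P);\,M^*(P))$; since $\Out_\F(P)$ has a nontrivial normal $p$-subgroup and $M^*(P)$ is an $\mathbb{F}_p[\Out_\F(P)]$-module, a transfer argument using the transfers $M_*$ supplied by the Mackey functor (together with the double-coset formula) shows these groups vanish. Hence the restriction map ${\lim}^i_{\orb(\F^c)}M^* \to {\lim}^i_{\orb(\F^{cr})}M^*$ is an isomorphism, and it suffices to work over $\orb(\F^{cr})$.

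\emph{Step 2: normalizer decomposition.} On $\orb(\F^{cr})$ I would invoke the normalizer decomposition of Libman, which produces a first-quadrant spectral sequence of the form
\[
E_1^{s,t} \;=\; \prod_{[\sigma]} H^t\!\left(\Aut_\F(\sigma);\, M^*(\sigma_0)\right) \;\Longrightarrow\; {\lim}^{s+t}_{\orb(\F^{cr})} M^*,
\]
the product running over $\F$-conjugacy classes of chains $\sigma = (\sigma_0 < \cdots < \sigma_s)$ of $\F$-centric $\F$-radical subgroups, with $\Aut_\F(\sigma)$ the setwise stabilizer of the chain. The objective is to show $E_1^{s,t} = 0$ for $t > 0$ and that the $t = 0$ row is exact in positive degrees. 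For $t > 0$ one again exploits that $\Aut_\F(\sigma)$ contains the $p$-group $\sigma_s$ acting, combined with the Mackey transfers on $M^*(\sigma_0)$, to kill the group cohomology; this is exactly the point at which passing from an arbitrary coefficient functor to a Mackey functor is what makes sharpness plausible.

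\emph{Step 3: exactness of the chain complex.} It remains to prove that the cochain complex $[\sigma] \mapsto M^*(\sigma_0)^{\Aut_\F(\sigma)}$ of fixed-point groups over the poset of chains is acyclic in positive degrees, with only cohomology ${\lim}^0 M^* = M^*(S)^{\Aut_\F(S)}$. For a realizable system $\F = \F_S(G)$ this is Dwyer's subgroup sharpness theorem read through the Cartan--Eilenberg stable-element description of $M^*(S)$ and the double-coset formula for $M$; intrinsically one wants a characteristic idempotent for $\F$ whose image under $M$ splits off the limit and trivializes the higher cohomology of the chain poset.

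\emph{The main obstacle.} Steps 2 and 3 are not automatic for \emph{exotic} fusion systems: there is no ambient finite group in which to run Dwyer's transfer argument, and the vanishing of $H^{>0}(\Aut_\F(\sigma);\,M^*(\sigma_0))$ as well as the exactness of the chain complex both hinge on detailed control of how the centric-radical subgroups are fused and of their automizers. For this reason the conjecture remains open in general, and the present paper does not attempt it in full: instead it takes $S$ to be a Sylow $p$-subgroup of ${\rm G}_2(p)$ with $p \geq 5$, where the poset $\F^{cr}$ is short and completely explicit, so that the collapse of the spectral sequence and the exactness of the $t = 0$ row can be checked directly, thereby establishing the conjecture in this family of cases.
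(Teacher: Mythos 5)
The statement you are addressing is the D\'iaz--Park sharpness conjecture itself, which this paper does not prove and which remains open: the paper only establishes it for saturated fusion systems on a Sylow $p$-subgroup of ${\rm G}_2(p)$, $p\geq 5$ (Theorem \ref{th:main}). Your proposal, as you concede in your final step, leaves its Steps 2 and 3 unproved, so it is not a proof of the statement; but beyond that, the sketch has concrete inaccuracies. In Step 1, the higher limits of an ``atomic'' functor concentrated at a conjugacy class $[P]$ are the $\Lambda$-functors $\Lambda^*(\Out_\F(P);M(P))$ of Jackowski--McClure--Oliver, not the group cohomology $H^*(\Out_\F(P);M(P))$; their vanishing when $O_p(\Out_\F(P))\neq 1$ holds for an arbitrary $\mathbb{F}_p$-module and needs no Mackey or transfer input, whereas the group-cohomology vanishing you assert is false in general and is not rescued by the transfer argument you describe. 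In Step 2 the logic is inverted: in characteristic $p$, the presence of a $p$-subgroup inside $\Aut_\F(\sigma)$ tends to make $H^{>0}(\Aut_\F(\sigma);M^*(\sigma_0))$ nonzero, and no known consequence of the Mackey axioms kills these groups for exotic systems --- this is exactly the open heart of the conjecture, so the proposed spectral-sequence collapse cannot be ``checked directly'' in general, and the normalizer-decomposition spectral sequence in the stated form (with group-cohomology $E_1$-terms converging to higher limits of an arbitrary $M^*$) is itself not an established tool here.

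It is also worth noting that the paper's route to its special case is entirely different from your Steps 1--3 and much more elementary. Following D\'iaz--Park, it suffices to treat the simple Mackey functors $S_{Q,V}$ (\cite[Proposition 4.3]{DP}), and by Proposition \ref{prop:trunc} together with \cite[Theorem A]{DP} the acyclicity over $\orb(\F^c)$ follows once one shows that the composite $S_{Q,V}(P)\xrightarrow{\Res_T^P} S_{Q,V}(T)\xrightarrow{\Ind_T^R} S_{Q,V}(R)$ vanishes whenever $P,R$ are $\F$-centric and $T=P\cap R$ is not. Theorem \ref{main} proves precisely this vanishing by a case analysis exploiting the maximal nilpotency class of $S$ and the classification of essential subgroups (Theorem \ref{thm:essentials}); no reduction to $\F$-centric-radical subgroups, no normalizer decomposition, and no spectral sequence appear anywhere. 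If you wish to salvage your approach, you would need either to prove your Steps 2--3 in general (which would settle the full conjecture) or to replace them, as the paper does, by the $S_{Q,V}$-reduction plus explicit subgroup-theoretic control of the intersections $P\cap R$ in the specific $p$-group at hand.
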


In particular, the sharpness conjecture for fusion systems implies the acyclicity of the cohomology functor $H^j(-; \mathbb{F}_p) \colon  \orb(\F^c) \to \mathbb{F}_p\lmod$ for any $j \geq 0$, which is one of the open problems listed in \cite[III.7]{AKO}.

As a direct consequence of Dwyer's subgroup sharpness theorem, the conjecture holds for the cohomology functor $H^j(-; \mathbb{F}_p) \colon  \orb(\F) \to \mathbb{F}_p\lmod$ and any fusion system $\F=\F_S(G)$ that is realized by a finite group $G$ containing $S$ as a Sylow $p$-subgroup. In \cite{DP} the authors showed that in fact all fusion systems realized by finite groups satisfy the sharpness conjecture for any Mackey functor $M$. Therefore it remains to test the conjecture on the so called exotic fusion systems, that is, saturated fusion systems not realizable by finite groups. Using the information on saturated fusion systems defined on a $p$-group having a maximal subgroup that is abelian uncovered by Oliver(\cite{p.index}), D\'iaz and Park proved that the sharpness conjecture also holds in this scenario (see \cite[Theorem C]{DP}).
In this paper, we apply D\'iaz and Park's methodology to saturated fusion systems defined on a Sylow $p$-subgroup of the group ${\rm G}_2(p)$, for $p\geq 5$, to prove that the sharpness conjecture holds for this family of saturated fusion systems as well.

\begin{theoremA}\label{th:main}
Let $p\geq 5$ be a prime, let  $S$ be a Sylow $p$-subgroup of the group ${\rm G}_2(p)$, let $\F$ be a saturated fusion system on $S$ and let
$M = (M^*, M_*) : \orb(\F) \to \mathbb{F}_p\lmod$
be a Mackey functor over $\orb(\F)$.
Then
\[{\lim\limits_{\longleftarrow}}_{\orb(\F^c)}^i M^*|_{\orb(\F^c)} = 0\] for all $i > 0$.
\end{theoremA}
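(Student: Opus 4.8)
The plan is to follow the strategy of D\'iaz and Park, combining the classification of saturated fusion systems on $S$ with the reduction of the sharpness problem to a finite collection of higher-limit computations attached to the $\F$-centric-radical subgroups of $\F$. Recall that $|S|=p^6$; moreover, for the fusion systems on $S$ that are realized by a finite group --- in particular for $\F_S({\rm G}_2(p))$ itself --- the conclusion of Theorem~\ref{th:main} is already known by \cite{DP}, so the real content lies in the exotic fusion systems on $S$. The first step is to invoke the classification of saturated fusion systems on $S$: since $p\ge 5$ it describes, for each such $\F$, the $\F$-conjugacy classes of $\F$-centric subgroups, and in particular the $\F$-essential subgroups $E$ together with the structure of $\Out_\F(E)$ and its action on the chief factors of $S$ lying inside $E$. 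The outcome we shall use is that the $\F$-centric-radical subgroups are $S$ together with a short list $E_1,\dots,E_k$ of essential subgroups, that the $S$-automizer $\Out_S(E_i)$ is cyclic of order $p$ and a Sylow $p$-subgroup of $\Out_\F(E_i)$, and that $\Out_\F(E_i)$ is a $p'$-extension of $\SL_2(p)$ (or, in degenerate cases, of $C_p$).

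For the second step, fix a Mackey functor $M=(M^*,M_*)$ over $\orb(\F)$. The isotropy (subgroup-order) filtration of the orbit category $\orb(\F^c)$ yields a filtration of $\lim^{i}_{\orb(\F^c)}M^*|_{\orb(\F^c)}$ whose graded pieces are subquotients of the twisted higher limits $\Lambda^{i}(\Out_\F(P);M^*(P))$, for $[P]$ ranging over the $\F$-conjugacy classes of $\F$-centric subgroups; this is the mechanism underlying \cite{DP}, rooted in the work of Jackowski, McClure and Oliver. It therefore suffices to prove that
\[\Lambda^{i}\bigl(\Out_\F(P);M^*(P)\bigr)=0\quad\text{for all $i>0$ and all $\F$-centric $P$.}\]
Three standard vanishing statements remove almost all of these terms: $\Lambda^{>0}(\Gamma;-)=0$ whenever $O_p(\Gamma)\ne 1$, which kills every $\F$-centric but non-$\F$-radical subgroup; $\Lambda^{>0}(\Gamma;-)=0$ whenever $|\Gamma|$ is prime to $p$, which kills $P=S$ because $\Out_\F(S)$ is a $p'$-group by the Sylow axiom of saturation; and $\Lambda^{i}(\Gamma;-)=0$ for $i$ larger than the $p$-rank of $\Gamma$, which, since each $\Out_\F(E_i)$ has $p$-rank $1$, removes every contribution of the essentials in degrees $i\ge 2$. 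Hence everything reduces to showing $\Lambda^{1}\bigl(\Out_\F(E);M^*(E)\bigr)=0$ for each essential subgroup $E$.

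For the third step, let $L$ denote the normal subgroup of $\Out_\F(E)$ isomorphic to $\SL_2(p)$ (or $C_p$); since $\Out_\F(E)/L$ has order prime to $p$, a transfer argument identifies $\Lambda^{1}\bigl(\Out_\F(E);M^*(E)\bigr)$ with the $\Out_\F(E)/L$-invariants of $\Lambda^{1}\bigl(L;M^*(E)\bigr)$. The group $\Lambda^{1}\bigl(L;M^*(E)\bigr)$ is accessible through the explicit computations of the functors $\Lambda^{*}$ for $\SL_2(p)$ due to Jackowski, McClure and Oliver, which evaluate $\Lambda^{1}(\SL_2(p);W)$ for an arbitrary $\mathbb{F}_p[\SL_2(p)]$-module $W$ in terms of its composition factors. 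The Mackey structure on $M$ is then what makes this input decisive: the compatibility of the restriction maps $M^*$ with the transfer maps $M_*$ (the double-coset formula) constrains $M^*(E)$, as a module over $\Out_\F(E)$, to be controlled by the values of $M$ on the $\F$-subgroups of $E$ and on $N_S(E)$, and this forces $\Lambda^{1}\bigl(L;M^*(E)\bigr)=0$. Carrying this out over the finitely many essential subgroups of the finitely many saturated fusion systems on $S$ finishes the proof.

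I expect the main obstacle to be exactly this last step: for \emph{every} saturated fusion system on $S$, and in particular for the exotic ones --- where $\Out_\F(E)$ may properly contain the automizer occurring in $\F_S({\rm G}_2(p))$ --- one must determine $\Out_\F(E)$ and its action on $M^*(E)$ precisely, and then exploit the Mackey axioms carefully enough to reduce $M^*(E)$ to values of $M$ on which the $\SL_2(p)$-cohomology vanishing is visible. A secondary, bookkeeping-style obstacle is to confirm from the classification that the $\F$-centric-radical subgroups are indeed only $S$ and essential subgroups with $p$-rank-$1$ automizer, so that no higher $\Lambda$-term survives; should that fail for some essential subgroup, one would additionally have to compute $\Lambda^{2}$ there by the same method.
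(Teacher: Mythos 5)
Your proposal takes a genuinely different route from the paper, and it contains a serious gap at the step on which everything hinges.

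The paper's actual proof of Theorem~\ref{th:main} is a short reduction: by \cite[Proposition 4.3]{DP} it suffices to prove the vanishing for the simple Mackey functors $S_{Q,V}$, and for those one only needs to verify the criterion of Proposition~\ref{prop:trunc} (namely, that the composition~(\ref{composition}) vanishes whenever $T=P\cap R$ fails to be $\F$-centric). That verification is precisely Theorem~\ref{main}, which occupies Sections~3--5 and is carried out by a careful analysis of the possible positions of $T$, $P$, $R$ inside the maximal-class group $S$, using the description of essential subgroups from \cite{GP}. Once~(\ref{composition}) is known to vanish, \cite[Theorem A]{DP} already delivers the acyclicity of $S_{Q,V}^*|_{\orb(\F^c)}$ --- no $\Lambda$-functor computations are needed in the paper at all.

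Your plan instead attacks the higher limits head-on via the isotropy filtration, reducing to $\Lambda^1\bigl(\Out_\F(E);M^*(E)\bigr)$ for essential $E$. The first two steps are sound in principle (these are indeed the standard JMO vanishing criteria), but the third step is not a proof: $\Lambda^1(\SL_2(p);W)$ is \emph{not} zero for arbitrary $\mathbb{F}_p[\SL_2(p)]$-modules $W$ (for instance, suitable composition factors supported on the Borel give nonzero $\Lambda^1$), so you cannot conclude unless you exhibit, concretely, which modules $M^*(E)$ can arise and show their composition factors all lie in the vanishing range. Saying that ``the compatibility of the restriction maps with the transfer maps constrains $M^*(E)$'' is the whole difficulty, and nothing in the proposal turns this into an argument. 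Moreover, you never invoke the reduction to simple Mackey functors $S_{Q,V}$ with $Q$ non-centric, which is the structural device that lets D\'iaz--Park bypass exactly this obstacle: for $S_{Q,V}$ the module $S_{Q,V}(E)$ is explicitly a trace image built from $V$, and the vanishing of the composition~(\ref{composition}) is a tractable combinatorial/group-theoretic statement rather than a representation-theoretic one. Without that reduction --- or some other mechanism that controls the $\Out_\F(E)$-module $M^*(E)$ for a generic Mackey functor $M$ --- the claimed vanishing of $\Lambda^1$ is unsupported, and the argument does not close.

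A secondary issue: you assume a complete classification of saturated fusion systems on $S$ with explicit knowledge of $\Out_\F(E)$ for every exotic $\F$. The paper deliberately avoids relying on such a classification; Theorem~\ref{main} is proved uniformly for all saturated fusion systems on $S$ using only the structural constraints on essential subgroups in maximal-class $p$-groups from \cite[Theorem D]{GP}, not a case-by-case enumeration.
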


Indeed, Theorem \ref{th:main} is a consequence of  the following stronger result involving simple Mackey functors $S_{Q,V}$ (Definition \ref{def:mackey.SQV}):

\begin{theoremA}\label{main}
Let $p\geq 5$ be a prime, let  $S$ be a Sylow $p$-subgroup of the group ${\rm G}_2(p)$ and let $\F$ be a saturated fusion system on $S$. Suppose $Q \leq S$ is not $\F$-centric and $V$ is a simple $\mathbb{F}_p\Out_\F(Q)$-module. Then for all $\F$-centric subgroups $P, R \leq S$ such that their intersection $T = P \cap R$ is not $\F$-centric the composition 
\begin{equation}\label{composition}
S_{Q,V}(P) \xrightarrow{\Res_T^P} S_{Q, V}(T) \xrightarrow{\Ind_T^R} S_{Q, V}(R)	
\end{equation}
is zero.
\end{theoremA}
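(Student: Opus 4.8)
The strategy is the one D\'iaz and Park employ for abelian-maximal-subgroup fusion systems: reduce the vanishing of the composition in \eqref{composition} to a statement about the poset of subgroups of $S$ conjugate into $Q$, and then exploit the very rigid subgroup structure of a Sylow $p$-subgroup $S$ of ${\rm G}_2(p)$ for $p\geq 5$. Recall that the simple Mackey functor $S_{Q,V}$ is supported (as a functor on $\orb(\F)$) on the $\F$-conjugacy class of $Q$ and its overgroups up to conjugacy; concretely, $S_{Q,V}(P)\neq 0$ forces $Q$ to be $\F$-conjugate to a subgroup of $P$, and when $P$ is $\F$-centric this pins down a short list of possibilities. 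So the first step is to classify, up to $S$-conjugacy, the $\F$-centric subgroups $P\leq S$ together with the $\F$-centric subgroups $R\leq S$ such that $T=P\cap R$ fails to be $\F$-centric; this uses the explicit list of centric subgroups of $S$ (the maximal abelian subgroups, the index-$p$ subgroups, and $S$ itself, with their automizers) which the paper will have recorded in an earlier section.

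Once the relevant triples $(P,T,R)$ are enumerated, the plan is to analyse the composite $\Ind_T^R\circ\Res_T^P$ on $S_{Q,V}$ case by case. The key formal input is the description of $S_{Q,V}$ on morphisms: $S_{Q,V}(T)$ is built from the $T$-fixed points, in an appropriate Mackey-functor sense, of the $\Out_\F(Q)$-module $V$ indexed over $\F$-conjugates of $Q$ contained in $T$, and $\Res$ and $\Ind$ are the evident restriction/transfer maps between these. When $T=P\cap R$ is not $\F$-centric while $P$ and $R$ are, the subgroup $Q$ — being $\F$-centric since $S_{Q,V}\ne 0$ on centrics forces this — cannot be $\F$-conjugate into $T$; I would make this precise and show it forces either $\Res_T^P$ or $\Ind_T^R$ (more precisely their relevant matrix entries) to vanish, because the indexing set of $\F$-conjugates of $Q$ inside $T$ is empty or the transfer factors through a zero module. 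Equivalently, one shows $S_{Q,V}(T)=0$ outright in the cases that survive the enumeration.

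The main obstacle is precisely the bookkeeping in the exceptional configurations: there may be centric subgroups $P$ and $R$ whose intersection $T$ is non-centric but which still contains an $\F$-conjugate of $Q$ — if $Q$ itself is small (e.g. a maximal abelian subgroup or its $\F$-conjugates sitting in more than one index-$p$ subgroup). In those cases $S_{Q,V}(T)$ need not be zero and one must instead argue that the transfer $\Ind_T^R$ kills the image of $\Res_T^P$ for module-theoretic reasons: the composite restricts the $\Out_\F(Q)$-action through a subgroup that acts without nonzero fixed vectors on $V$, or the double-coset formula governing $\Ind_T^R\circ\Res_T^P$ telescopes to a sum of maps each factoring through a non-centric overgroup of $Q$ on which $S_{Q,V}$ vanishes. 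Handling this requires knowing the fusion of the small subgroups of $S$ and the action of $\Out_\F(Q)$ on $V$ well enough to see the relevant invariants vanish; I expect this to be where the hypothesis $p\geq 5$ and the specific structure of ${\rm G}_2(p)$ (in particular that $S$ has a unique maximal abelian subgroup of a given type, and controlled $\Out_\F$-actions on it) are used decisively. The remaining cases, where $T$ genuinely contains no $\F$-conjugate of $Q$, give $S_{Q,V}(T)=0$ and the composite is trivially zero.
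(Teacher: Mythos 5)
Your proposal has a critical error that undermines the whole strategy. You assert that ``$Q$ --- being $\F$-centric since $S_{Q,V}\neq 0$ on centrics forces this --- cannot be $\F$-conjugate into $T$.'' But the hypothesis of the theorem is precisely that $Q$ is \emph{not} $\F$-centric, and $S_{Q,V}(P)\neq 0$ for a centric $P$ does not upgrade $Q$ to centric: it only produces some $L\leq P$ with $L\in Q^{\F}$ and $C_P(L)\leq L$, which is weaker. Worse, the consequence you draw is the opposite of what actually happens: if the composition $\Ind_T^R\circ\Res_T^P$ is nonzero, then $S_{Q,V}(T)\neq 0$, so $T$ \emph{does} contain an $\F$-conjugate of $Q$ with $C_T(L)\leq L$ (Lemma \ref{lem:SQV.nonzero}). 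Thus the ``hard case'' you defer to the end is not exceptional bookkeeping --- it is the only case, and the trivial case where $S_{Q,V}(T)=0$ never occurs under the contradiction hypothesis.

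The second structural problem is that you lean on ``the explicit list of centric subgroups of $S$ \dots which the paper will have recorded,'' but no such enumeration is made or needed. The paper's actual route is different: assume the composition is nonzero and derive a contradiction from the maximal-nilpotency-class structure of $S$ and the classification of $\F$-essential subgroups in maximal class $p$-groups (\cite[Theorem D]{GP}, quoted as Theorem \ref{thm:essentials}). The key intermediate facts --- $Z(S)<T$, $C_S(P),C_S(R)\leq T$, $T$ abelian normal forces $\Out_P(T)\neq\Out_R(T)$, $T\leq\gamma_1(S)$ or $T\leq C_S(Z_2(S))$, $T$ not $S$-centric --- funnel into showing $N_S(T)=\gamma_1(S)$ and $p^2\leq|T|\leq p^3$, and then a short case analysis on whether $P,R$ lie in $\gamma_1(S)$ forces $\gamma_3(S)\leq T$ and hence $T\norm S$, contradiction. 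Your module-theoretic heuristic (``invariants vanish,'' ``double-coset formula telescopes'') gestures in the right direction but would need to be replaced by this group-theoretic argument to go through; as written, the proposal does not constitute a proof and rests on a false premise about $Q$.
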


One of the main ingredients used in the proof of Theorem \ref{main} is the knowledge of the structure of the $\F$-essential subgroups of $S$, first described in \cite{G2p} and then in more generality in \cite{GP}, while characterizing saturated fusion systems defined on a $p$-group having maximal nilpotency class. 

\vspace{0.3cm}
We remark that in a recent paper \cite{punctured}, Henke, Libman and Lynd proved that the families of fusion systems affording the structure of punctured groups, such as the Benson-Solomon system $\F_{Sol}(3)$ \cite[III.6.3, Theorem 6.7]{AKO}, the Parker-Stroth systems \cite{ParkerStroth} and the Clelland-Parker systems \cite{ClellandParker2010} in which all essential subgroups are abelian, satisfy the sharpness conjecture (although they gave an explicit proof only for the cohomology functor and not for a generic Mackey functor).
The sharpness conjecture for fusion systems is still open for other families of exotic fusion systems. 

\vspace{0.5cm}
Organization of the paper. In Section 2 we recall the definitions of orbit category, Mackey functors and simple Mackey functors for fusion systems and we prove some basic properties. In Section 3 we work under the assumption that $\F$ is a saturated fusion system on a finite $p$-group $S$ and $Q, P$ and $R$ are subgroups of $S$ for which the composition of maps (\ref{composition}) is non-zero. We underline that the results proved in this section hold for any finite $p$-group $S$ and can represent the starting point for the proof of the sharpness conjecture for other families of saturated fusion systems. In Section 4 we specialize our investigation on $p$-groups of maximal nilpotency class, given that the Sylow $p$-subgroups of the group $\rm{G}_2(p)$ for $p\geq 5$ are of this type.  Finally, Section 5 contains the proof of Theorems \ref{th:main} and \ref{main}. 

\section{Preliminaries}

We refer to \cite{AKO} for an introduction to saturated fusion systems.
Here we recall the definitions of orbit categories and Mackey functors adapted to the setting of fusion systems, following the approach of \cite{DP}.

\begin{definition}\label{def:orbit.category} Let $\F$ be a fusion system on a $p$-group $S$. The \emph{orbit category} of $\F$ is a category $\orb(\F)$ whose objects are the subgroups of $S$ and whose morphisms are given by the left quotient:
\[
    \Hom_{\orb(\F)}(P, Q) = \Inn(Q) \lqt \Hom_\F(P, Q).
\] 
If $\varphi \in \Hom_\F(P,Q)$ for some $P,Q\leq S$, we denote by $[\varphi]$ the corresponding morphism in $\Hom_{\orb(\F)}(P, Q)$.

\end{definition}

\begin{remark}
    Any morphism $\alpha : Q' \to P$ in the orbit category is injective and can be described as a composition of the equivalence class of an inclusion with an isomorphism, i.e. \[
    \alpha = [\iota_Q^P][\varphi]
    \]
    where $\iota_Q^P$ is the inclusion map $Q \hookrightarrow P$ of a subgroup $Q$ of $P$ and $\varphi : Q' \to Q$ is an $\F$-isomorphism. 
\end{remark}

If $\X$ is an $\F$-overconjugacy closed collection of subgroups of $S$ (that is, if $P \in \X$ then all overgroups of $P$ in $S$ and all $\F$-conjugates of $P$ are contained in $\X$ as well), then  we have an analogous definition for $\orb(\X)$ in which the objects are exactly the subgroups in $\X$. An important example is given by the \emph{centric-orbit category} $\orb(\F^c)$ where $\F^c$ is the collection of $\F$-centric subgroups of $S$, that is, all subgroups $P$ of $S$ such that $C_S(Q)\leq Q$ for every subgroup $Q$ of $S$ that is $\F$-conjugate to $P$.

We will now recall the definition of \emph{$\X$-restricted Mackey functor for $\F$} as introduced in \cite[Defintion 2.1]{DP}, specializing it for the finite field $\mathbb{F}_p$.

\begin{definition}\label{def:mackey}  Let $\F$ be a saturated fusion system on a $p$-group $S$ and let $\X$ be an $\F$-overconjugacy closed collection of subgroups of $S$. A \emph{$\X$-restricted Mackey functor for $\F$} is given by a pair of functors
\[
    M = (M^*, M_*) : \orb(\X) \to \mathbb{F}_p\lmod
\]
satisfying the following conditions:
\begin{enumerate}
    \item{\rm (Bivariance)} The functors $M^*$ and $M_*$ are respectively covariant and contravariant and take the same values on all objects of $\orb(\X)$. For all $P \leq S$ we denote $M^*(P) = M_*(P)$ by $M(P)$.
    \item{\rm (Isomorphism)} For all isomorphisms $\alpha$ in $\orb(\X)$ we have $M^*(\alpha) = M_*(\alpha^{-1})$.
    \item{\rm ($\X$-truncated Mackey decomposition)} If $Q, R \leq P \leq S$ with $Q, R, P \in \X$ we have 
    \[
			M^*([\iota_R^P]) \circ M_*([\iota_Q^P]) = \sum\limits_{x \in [R\backslash P/Q]_\X} M_*([\iota_{R\cap \cj xQ}^R]) \circ M^*([\iota_{R\cap \cj xQ}^{\cj xQ}]) \circ M_*([c_x|_Q]).		
    \]
    where $[R\backslash P/Q]_\X$ denotes a set of representatives of double cosets of $P$ of the form $RxQ$ with $Q \cap \cj xR \in \X$
\end{enumerate}

When $\X$ consists of all subgroups of $S$, then $\orb(\X) = \orb(\F)$ and we simply say that $M$ is a Mackey functor for $\F$.
\end{definition}

If the functor $M$ is clear from the context we will adopt the following notation: for subgroups $Q \leq P \leq S$ and an $\F$-isomorphism $\varphi$ let
\begin{itemize}
    \item $\Ind_Q^P := M_*([\iota_Q^P]) : M(Q) \to M(P)$;
    \item $\Res_Q^P := M^*([\iota_Q^P]) : M(P) \to M(Q)$;
    \item $\Iso(\varphi) := M_*([\varphi])$.
\end{itemize}

\begin{remark}
    For any Mackey functor $M : \orb(\F) \to \mathbb{F}_p\lmod$, the image $M(P)$ of $P \leq S$ always admits a structure of $\mathbb{F}_p\Out_\F(P)$-module. 
\end{remark}

We will be interested in restricting the domain of a Mackey functor $M$ to the centric-orbit category $\orb(\F^c)$. It is however not always the case that such a restriction will satisfy the $\F^c$-truncated Mackey formula. \\
In other words, using the notation of Definition \ref{def:mackey}, for a Mackey functor 
\[
    M = (M^*, M_*) : \orb(\F) \to \mathbb{F}_p\lmod
\] 
the restriction  $M|_{\orb(\F^c)} = (M^*|_{\orb(\F^c)}, M_*|_{\orb(F^c)})$ is not necessarily an $\F^c$-restricted Mackey functor. A sufficient condition is given by the following criterion.

\begin{proposition}\cite[Proposition 4.4]{DP}\label{prop:trunc}
    Let $\F$ be a saturated fusion system on $S$ and $M$ a Mackey functor for $\F$ over a field of characteristic $p$. Then the restriction $M|_{\orb(\F^c)}$ is an $\F^c$-truncated Mackey functor for $\F$ if the composite
    \[
        M(P) \xrightarrow{\Res_{P\cap R}^P} M(P\cap R) \xrightarrow{\Ind_{P\cap R}^R} M(R)
    \]
    is zero whenever $P, R \leq S$ are $\F$-centric and $P\cap R$ is not $\F$-centric.
\end{proposition}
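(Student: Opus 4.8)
The plan is to derive the $\F^c$-truncated Mackey decomposition for $M|_{\orb(\F^c)}$ directly from the untruncated decomposition for $M$, using the hypothesis to discard precisely the summands that would otherwise lie outside the truncated range of summation. Since $\F^c$ is $\F$-overconjugacy closed, $\orb(\F^c)$ is a full subcategory of $\orb(\F)$ and $M|_{\orb(\F^c)}$ is a well-defined pair of functors to $\mathbb{F}_p\lmod$ taking equal values on objects; the Bivariance and Isomorphism axioms of Definition \ref{def:mackey} are conditions on objects and on isomorphisms only, hence pass to the restriction unchanged. So everything reduces to establishing the $\F^c$-truncated Mackey formula: for all $Q,R\leq P\leq S$ with $P,Q,R$ all $\F$-centric,
\[
\Res_R^P\circ\Ind_Q^P \;=\; \sum_{x\in[R\backslash P/Q]_{\F^c}} \Ind_{R\cap\cj xQ}^R\circ\Res_{R\cap\cj xQ}^{\cj xQ}\circ\Iso(c_x|_Q).
\]

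First I would invoke the untruncated Mackey decomposition for $M$, which is available because $M$ is a Mackey functor for $\F$: this is the $\X$-truncated Mackey decomposition axiom of Definition \ref{def:mackey} with $\X$ the collection of all subgroups of $S$, for which the indexing condition is vacuous. For the same $Q,R\leq P$ it reads
\[
\Res_R^P\circ\Ind_Q^P \;=\; \sum_{x\in[R\backslash P/Q]} \Ind_{R\cap\cj xQ}^R\circ\Res_{R\cap\cj xQ}^{\cj xQ}\circ\Iso(c_x|_Q),
\]
the sum now ranging over a full set of double coset representatives. I would then split this index set according to whether $R\cap\cj xQ$ is $\F$-centric; this is a property of the double coset $RxQ$, not of the chosen representative, since passing from $x$ to $rxq$ ($r\in R$, $q\in Q$) replaces $R\cap\cj xQ$ by its $R$-conjugate $\cj{r}{(R\cap\cj xQ)}$, and $\F$-centricity is invariant under $S$-conjugation. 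By the definition of $[R\backslash P/Q]_{\F^c}$ in Definition \ref{def:mackey}, the summands indexed by those double cosets with $R\cap\cj xQ$ $\F$-centric are exactly the ones appearing on the right-hand side of the truncated formula.

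It remains to show that each of the other summands vanishes, and this is the only point where the hypothesis is used. Fix $x$ with $U:=R\cap\cj xQ=\cj xQ\cap R$ not $\F$-centric. The subgroup $\cj xQ$ is $S$-conjugate to the $\F$-centric subgroup $Q$ (via $c_x$), hence itself $\F$-centric, and $R$ is $\F$-centric by assumption; thus $\cj xQ$ and $R$ are $\F$-centric subgroups of $S$ whose intersection $U$ is not $\F$-centric, so the hypothesis applied to the pair $(\cj xQ,R)$ gives $\Ind_U^R\circ\Res_U^{\cj xQ}=0$ as a map $M(\cj xQ)\to M(R)$. Precomposing with the isomorphism $\Iso(c_x|_Q)\colon M(Q)\to M(\cj xQ)$ shows the corresponding summand is zero. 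Hence the untruncated sum collapses onto its sub-sum over $[R\backslash P/Q]_{\F^c}$, which is exactly the $\F^c$-truncated Mackey decomposition; together with the first paragraph this proves $M|_{\orb(\F^c)}$ is an $\F^c$-truncated Mackey functor for $\F$. The argument is essentially bookkeeping: the only points deserving care are checking that $\F$-centricity of $R\cap\cj xQ$ is a well-defined condition on double cosets and matches the index set $[R\backslash P/Q]_{\F^c}$, and keeping track of conjugations so that the hypothesis is invoked for the correct $\F$-centric pair $(\cj xQ,R)$ — and the characteristic of the coefficient field is not used here.
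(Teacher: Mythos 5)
Your proof is correct, and since the paper does not prove this proposition but simply cites \cite[Proposition 4.4]{DP}, your argument supplies exactly the expected reasoning: apply the untruncated Mackey decomposition (available because $M$ is a Mackey functor on all of $\orb(\F)$), observe that $\F$-centricity of the intersection depends only on the double coset, and annihilate the non-centric summands by invoking the hypothesis for the $\F$-centric pair $(\cj xQ, R)$, so that only the $[R\backslash P/Q]_{\F^c}$-indexed terms survive. The sole cosmetic point is that Definition \ref{def:mackey} indexes the truncated sum by the condition $Q\cap \cj xR\in\X$ rather than $R\cap\cj xQ\in\X$; these subgroups are $\F$-conjugate via $c_{x^{-1}}$, so for $\X=\F^c$ the two conditions coincide and your identification of the surviving index set is valid (and, as you note, the characteristic of the coefficient field plays no role in this reduction).
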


We are now going to introduce the important  family $S_{Q,V}$ of Mackey functors, parametrized by a subgroup $Q$ of $S$ and a left $\mathbb{F}_p\Out_\F(Q)$-module $V$.

\begin{definition}
    Let $\F$ be a fusion system on a $p$-group $S$ and let $P, R \leq S$ be such that  $\Iso_\F(P,R) \neq \emptyset$. If $\alpha \in \Iso_\F(P,R)$ and $V$ is a $\mathbb{F}_p\Out_\F(P)$-module, then we denote by $\cj \alpha V$ the structure of $\mathbb{F}_p\Out_\F(R)$-module on $V$ induced by the following action
    \[
        [\gamma] \cdot v := [\alpha^{-1}\gamma\alpha]v 
    \]
    for all $[\gamma] \in \Out_\F(R)$ and $v \in V$.
\end{definition}

\begin{definition}\label{def:mackey.SQV}
    Let $\F$ be a saturated fusion system on a $p$-group $S$. Consider a subgroup $Q \leq S$ and a left $\mathbb{F}_p\Out_\F(Q)$-module $V$. We are going to define a Mackey functor $S_{Q,V} : \orb(\F) \to \mathbb{F}_p\lmod$.

\begin{enumerate}
    \item[(a)] For any subgroup $L \leq S$ which is $\F$-conjugate to $Q$, fix an $\F$-isomorphism $\alpha : Q \to L$ and define $S_{Q, V}(L) := \cj \alpha V$. \medskip
    \item[(b)] For $P \leq S$ we define
    \[
        S_{Q, V}(P) := \bigoplus_{\alpha : Q \cong L \leq_P P} \tr_L^{N_P(L)}(\cj \alpha V), 
    \]
    where the sum is indexed over the $P$-conjugacy classes of subgroups $L$ that are $\F$-isomorphic to $Q$. We will sometimes denote the direct summands of $S_{Q, V}(P)$ by $V_{\alpha, L}(P)$. 

    \item[(c)] Consider an isomorphism $[\varphi] : L_1 \to L_2$ in $\orb(\F)$. Suppose we fixed $\F$-isomorphisms $\alpha_i : Q \to L_i$ for $i = 1, 2$. Then we define
    \begin{align*}
        \Iso([\varphi]) := {S_{Q, V}}_*([\varphi]) : \cj{\alpha_1}{V} &\to \cj{\alpha_2}{V} \\
        v &\mapsto [\alpha_2^{-1}\varphi\alpha_1] \cdot v,
    \end{align*}
   Finally we define ${S_{Q, V}}^*([\varphi]) := \Iso([\varphi^{-1}])$. \medskip
    
    \item[(d)] Consider an isomorphism $[\varphi] : P \to R$ in $\orb(\F)$. We define $\iso([\varphi]) := {S_{Q, V}}_*([\varphi])$ on each summand $V_{\alpha, L}(P)$ using point (c):
        \[
            \Iso([\varphi])|_{V_{\alpha, L}(P)} := \Iso([\varphi|_L]).
        \]
    Finally we define ${S_{Q, V}}^*([\varphi]):= \Iso([\varphi^{-1}])$. \medskip
    
    \item[(e)] Consider an inclusion of subgroups $\iota_P^R : P \hookrightarrow R$ of $S$ and the corresponding morphism $[\iota_P^R]$ in $\orb(\F)$. We define $\Ind_P^R := {S_{Q, V}}_*([\iota_P^R]) : S_{Q, V}(P) \to S_{Q, V}(R)$ on each direct summand as follows:
    \begin{align*}
        \Ind_P^R|_{V_{\alpha, L}(P)} : V_{\alpha, L}(P) &\to V_{\alpha, L}(R) \\
        v &\mapsto \tr_{N_P(L)}^{N_R(L)}(v).
    \end{align*}
    \medskip
    \item[(f)] The contravariant image, $\Res_P^R := {S_{Q,V}}^*([\iota_P^R]) : S_{Q, V}(R) \to S_{Q, V}(P)$, is defined as follows. Consider a direct summand $V_{\alpha, L}(R)$, if for all $r \in R$ the conjugate $\cj r L \nleq P$ then $V_{\alpha, L}(R)$ is mapped to zero. Suppose otherwise that there are $R$-conjugates of $L$ in $P$, fix the representatives for their $P$-conjugacy classes $L_i = \cj{r_i}{L}$ and the $\F$-isomorphisms $\beta_i = c_{r_i}\circ \alpha$. Then $V_{\alpha, L}(R)$ is mapped to $V_{\beta_1, L_1}(P) \oplus \cdots \oplus V_{\beta_k, L_k}(P)$ via the diagonal map:
    \[
        v \mapsto v + \cdots + v.
    \]
\end{enumerate}
\end{definition}

\begin{remark}Given two Mackey functors $M, N : \orb(\F) \to \mathbb{F}_p\lmod$ we say that $N$ is a subfunctor of $M$ if for all $P \in \orb(\F)$ we have $N(P) \leq M(P)$. Accordingly, a Mackey functor is \emph{simple} if it admits no subfunctor than itself and the zero functor. The work of \cite{Webb}, generalized to the context of fusion systems by D\'iaz and Park \cite{DP}, shows that $S_{Q,V}$ (as defined in Definition \ref{def:mackey.SQV}) is in fact a simple Mackey functor as soon as $V$ is a simple $\mathbb{F}_p\Out_\F(Q)$-module and that all simple Mackey functors are of this form.
\end{remark}

We now state and prove a property first uncovered by Webb \cite[Proposition 2.8]{Webb}, in the language used in \cite{DP}.

\begin{lemma}\label{lem:trace.nonzero} Let $\F$ be a saturated fusion system on a $p$-group $S$ and let $S_{Q,V} : \orb(\F) \to \mathbb{F}_p\lmod$ be the simple Mackey functor associated to the subgroup $Q\leq S$ and the left $\mathbb{F}_p\Out_\F(Q)$-module $V$.
    Let $L \leq P \leq S$ be such that $L \in Q^\F$.
    If $\alpha\in \Iso_\F(Q,L)$ and $V_{\alpha, L}(P) = \tr_L^{N_P(L)}(\cj \alpha V) \neq 0$, then  $C_P(L) \leq L$.
\end{lemma}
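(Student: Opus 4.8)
The plan is to prove the contrapositive: assuming $C_P(L)\not\le L$, I will show that the map $\tr_L^{N_P(L)}\colon \cj\alpha V\to \cj\alpha V$ whose image is the summand $V_{\alpha,L}(P)$ is identically zero. Write $N=N_P(L)$, $C=C_P(L)$ and $W=\cj\alpha V$. First I would make explicit the module structure that underlies the trace. The group $N$ acts on $L$ by conjugation, and composing with $\Aut_\F(L)\twoheadrightarrow\Out_\F(L)$ turns $W$ into an $\mathbb{F}_pN$-module; here $L$ acts trivially (its image in $\Out_\F(L)$ is $\Inn(L)=1$) and $C$ acts trivially as well, because conjugation by an element of $C_P(L)$ is literally the identity automorphism of $L$. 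Hence $\tr_L^N$ is the ordinary relative trace $w\mapsto\sum_{g\in[N/L]}g\cdot w$, defined on all of $W=W^L$, and $V_{\alpha,L}(P)$ is its image.

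Next I would exploit that both $L$ and $C$ are normal in $N$, so that $LC$ is a subgroup of $N$ containing $L$ as a normal subgroup, and $LC$ still acts trivially on $W$. By the transitivity of the relative trace, $\tr_H^G=\tr_K^G\circ\tr_H^K$ for $H\le K\le G$, we get $\tr_L^N=\tr_{LC}^N\circ\tr_L^{LC}$. Restricted to $W$, the inner map $\tr_L^{LC}$ is multiplication by the scalar $|LC:L|=|C:C\cap L|=|C:Z(L)|$, since $LC/L$ acts trivially on $W$. Because we are assuming $C\not\le L$, this index is a nontrivial power of $p$ and therefore vanishes in $\mathbb{F}_p$; thus $\tr_L^{LC}=0$ on $W$, whence $\tr_L^N=\tr_{LC}^N\circ\tr_L^{LC}=0$. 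This contradicts the hypothesis $V_{\alpha,L}(P)\neq 0$, so $C_P(L)\le L$ (indeed $C_P(L)=Z(L)$).

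The argument is essentially formal, so there is no serious obstacle; the only points that require care are (i) verifying that $C_P(L)$, and hence $LC_P(L)/L$, acts trivially on $\cj\alpha V$, which is what forces the scalar $|C_P(L):Z(L)|$ to appear, and (ii) invoking the standard transitivity formula for the relative transfer together with the fact that a nontrivial power of $p$ is $0$ in $\mathbb{F}_p$. It is worth noting that the statement is really a property of relative transfer for $p$-groups, in the spirit of Webb, and uses neither saturation of $\F$ nor any structural feature of $S$.
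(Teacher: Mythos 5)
Your proof is correct and follows essentially the same route as the paper's: factor the trace through $LC_P(L)$ using transitivity, observe that $L$ and $C_P(L)$ act trivially on $\cj\alpha V$ via $\Out_\F(L)$, and conclude that the inner trace is multiplication by the $p$-power $|LC_P(L):L|$, hence zero over $\mathbb{F}_p$. The only cosmetic difference is that you phrase it as a contrapositive while the paper argues by contradiction.
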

\begin{proof}
    Suppose $C_P(L) \nleq L$, then $L < LC_P(L) \leq N_P(L)$ and \[
        V_{\alpha, L}(P) = \tr_L^{N_P(L)}(\cj \alpha V) = \tr_{LC_P(L)}^{N_P(L)} \tr_L^{LC_P(L)}(\cj \alpha V).
    \]
    The action of $N_P(L)$ on the $\mathbb{F}_p\Out_\F(L)$-module $\cj \alpha V$ is given by the composition of maps $N_P(L) \to \Aut_P(L) \subseteq \Aut_\F(L) \to \Out_\F(L)$. Both $L$ and $C_P(L)$ get mapped to the trivial subgroup under this composition, hence their action on $\cj \alpha V$ is trivial and so $\tr_L^{LC_P(L)}(\cj \alpha V) = 0$ which implies $V_{\alpha, L}(P) = 0$, a contradiction.
\end{proof}

We now set a notation for the subgroups of $P$ that are $\F$-conjugate to $Q$ and self-centralizing in $P$, whose importance is highlighted by Lemma \ref{lem:trace.nonzero}.

\begin{definition}  Let $\F$ be a saturated fusion system on a $p$-group $S$. For $P,Q \leq S$, we define the set
    \[
        \L(P,Q) = \{ L \leq P \ |\ \Iso_\F(Q, L) \neq \emptyset {\rm \ and\ } C_P(L) \leq L \}.
    \]

\end{definition}

\begin{lemma}\label{lem:SQV.nonzero}
Let $\F$ be a saturated fusion system on a $p$-group $S$ and let $S_{Q,V} : \orb(\F) \to \mathbb{F}_p\lmod$ be the simple Mackey functor associated to the subgroup $Q\leq S$ and the left $\mathbb{F}_p\Out_\F(Q)$-module $V$.
    Let $P \leq S$.
    \begin{enumerate}[(i)]
        \item If $S_{Q, V}(P) \neq 0$, then $\L(P,Q) \neq \emptyset$;
        \item If $L \in \L(P,Q)$ and $x \in P$ then $\cj x L \in \L(P, Q)$.
        \item If $L \in \L(P,Q)$ and $L\leq T \leq P$ then $L \in \L(T, Q)$.
   
    \end{enumerate}
\end{lemma}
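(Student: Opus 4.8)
The plan is to unwind the definitions of $\L(P,Q)$ and of the functor $S_{Q,V}$ in each of the three cases; nothing deeper than Lemma \ref{lem:trace.nonzero} is needed.

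For (i), suppose $S_{Q,V}(P)\neq 0$. By Definition \ref{def:mackey.SQV}(b) we have $S_{Q,V}(P)=\bigoplus V_{\alpha,L}(P)$ with $V_{\alpha,L}(P)=\tr_L^{N_P(L)}(\cj\alpha V)$, the sum running over $P$-conjugacy classes of subgroups $L\leq P$ with $\Iso_\F(Q,L)\neq\emptyset$. Hence at least one summand $V_{\alpha,L}(P)$ is nonzero. Lemma \ref{lem:trace.nonzero} then gives $C_P(L)\leq L$, and since $\alpha\in\Iso_\F(Q,L)$ we have $\Iso_\F(Q,L)\neq\emptyset$; therefore $L\in\L(P,Q)$, so $\L(P,Q)\neq\emptyset$.

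For (ii), let $L\in\L(P,Q)$ and $x\in P$. Since $x\in P\leq S$, the conjugation map $c_x\colon L\to\cj xL$ is a morphism in $\F$, so composing a chosen element of $\Iso_\F(Q,L)$ with $c_x$ produces an element of $\Iso_\F(Q,\cj xL)$, whence $\Iso_\F(Q,\cj xL)\neq\emptyset$. Moreover conjugation by $x$ is an automorphism of $P$ that carries $C_P(L)$ onto $C_P(\cj xL)$, so $C_P(\cj xL)=\cj x\bigl(C_P(L)\bigr)\leq\cj xL$ because $C_P(L)\leq L$. Hence $\cj xL\in\L(P,Q)$.

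For (iii), let $L\in\L(P,Q)$ with $L\leq T\leq P$. The condition $\Iso_\F(Q,L)\neq\emptyset$ is unaffected by shrinking the ambient group, and $C_T(L)=C_P(L)\cap T\leq C_P(L)\leq L$, so $C_T(L)\leq L$ and $L\in\L(T,Q)$. In all three parts the argument is a direct translation of the defining conditions, so I do not expect any genuine obstacle; the only nontrivial external input is the application of Lemma \ref{lem:trace.nonzero} in part (i).
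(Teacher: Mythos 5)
Your proof is correct and follows essentially the same route as the paper's: part (i) via the direct-sum description in Definition \ref{def:mackey.SQV}(b) together with Lemma \ref{lem:trace.nonzero}, part (ii) by composing with $c_x$ and noting $C_P(\cj xL)=\cj x(C_P(L))$, and part (iii) by the containment $C_T(L)\leq C_P(L)$.
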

\begin{proof}
    Suppose $S_{Q, V}(P) \neq 0$. By Definition \ref{def:mackey.SQV}(a)(b) there exists a subgroup $L$ of $P$ such that $L \in Q^\F$ and $V_{\alpha,L}(P) \neq \emptyset$. Part (i) now follows from Lemma \ref{lem:trace.nonzero}.
    
    Consider now $L \in \L(P, Q)$ and $\alpha \in \Iso_\F(Q,L)$. For all $x \in P$ the composition $c_x \circ \alpha : Q \to \cj x L$ is still an isomorphism in $\F$. Also, since $C_P(L) \leq L$ we have $C_P(\cj x L) = \cj{x}{C_P(L)} \leq \cj x L$ proving $\cj x L \in \L(P, Q)$. Thus part (ii) holds.
    
    Finally, if $L \in \L(P, Q)$ and $L \leq T \leq P$, then $C_T(L) \leq C_P(L) \leq L$ which implies $L \in \L(T, Q)$ and proves part (iii).

\end{proof}

\newpage
\section{First results}
In this section we work under the following:

\begin{hypothesis}\label{hp:QV}
Suppose $p$ is an odd prime, $S$ is a finite $p$-group and $\F$ is a saturated fusion system on $S$. Fix a subgroup $Q \leq S$ that is not $\F$-centric and a simple $\mathbb{F}_p\Out_\F(Q)$-module $V$.
\end{hypothesis}

\begin{lemma}\label{lem:K-abelian}
Suppose Hypothesis \ref{hp:QV} holds. If $K\leq S$ is abelian and $S_{Q,V}(K)\neq 0$, then $K$ is $\F$-conjugate to $Q$. In particular if $\alpha \in \Iso_\F(Q,K)$ then $S_{Q,V}(K) = \cj{\alpha}{V}$.
\end{lemma}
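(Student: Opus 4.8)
The plan is to deduce the result from the non-vanishing criterion of Lemma~\ref{lem:SQV.nonzero}(i) together with the elementary observation that a self-centralising subgroup of an abelian group is the whole group. First I would note that $S_{Q,V}(K)\neq 0$ forces $\L(K,Q)\neq\emptyset$ by Lemma~\ref{lem:SQV.nonzero}(i), so there is $L\leq K$ with $\Iso_\F(Q,L)\neq\emptyset$ and $C_K(L)\leq L$. Since $K$ is abelian we have $C_K(L)=K$, whence $K=C_K(L)\leq L\leq K$ and therefore $L=K$. Thus $\Iso_\F(Q,K)\neq\emptyset$, i.e.\ $K\in Q^\F$, which is the first claim.

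For the ``in particular'' statement I would unwind Definition~\ref{def:mackey.SQV}(a)--(b): there $S_{Q,V}(K)=\bigoplus\tr_{L}^{N_K(L)}(\cj{\alpha}{V})$ with the sum indexed by the $K$-conjugacy classes of subgroups $L\leq K$ lying in $Q^\F$, which---$K$ being abelian---is simply the set of such subgroups. By Lemma~\ref{lem:trace.nonzero}, any summand indexed by an $L$ with $C_K(L)\nleq L$ is zero, and by the paragraph above the only $L\leq K$ in $Q^\F$ with $C_K(L)\leq L$ is $L=K$ itself. Hence the only surviving summand is the one indexed by $K$, and since $N_K(K)=K$ it equals $\tr_K^K(\cj{\alpha}{V})=\cj{\alpha}{V}$; so $S_{Q,V}(K)=\cj{\alpha}{V}$ for the chosen $\alpha\in\Iso_\F(Q,K)$, a different choice of $\alpha$ changing this module only by a canonical isomorphism of $\mathbb{F}_p\Out_\F(K)$-modules.

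I do not anticipate a genuine obstacle here: the content is the fact about abelian $p$-groups, and the only point that needs care is to invoke Lemma~\ref{lem:trace.nonzero} so as to kill \emph{all} the remaining direct summands of $S_{Q,V}(K)$, rather than merely concluding $\L(K,Q)=\{K\}$. One may also remark that neither the oddness of $p$ nor the failure of $Q$ to be $\F$-centric from Hypothesis~\ref{hp:QV} is actually used in this particular lemma.
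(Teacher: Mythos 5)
Your argument is correct and follows essentially the same route as the paper: apply Lemma~\ref{lem:SQV.nonzero}(i) to produce $L\leq K$ with $\Iso_\F(Q,L)\neq\emptyset$ and $C_K(L)\leq L$, then use abelianness of $K$ to force $L=K$. The paper leaves the ``in particular'' clause implicit; your unwinding of Definition~\ref{def:mackey.SQV} to show the sole surviving summand is $\tr_K^{K}(\cj{\alpha}{V})=\cj{\alpha}{V}$ is a correct and welcome extra step, and your closing remark about which parts of Hypothesis~\ref{hp:QV} are actually used is also accurate.
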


\begin{proof} By Lemma \ref{lem:SQV.nonzero} the assumption $S_{Q,V}(K) \neq 0$ implies that there exists a subgroup $L \leq K$ which is $\F$-conjugate to $Q$ and such that $C_K(L) \leq L$. Since $K$ is abelian, we obtain $K = C_K(L) \leq L$ and so $K = L$ is $\F$-conjugate to $Q$. \end{proof}

From now on we consider the case of subgroups $Q,P$ and $R$ of $S$ for which the composition of maps (\ref{composition}) is non-zero, that is, we assume 

\begin{hypothesis}\label{hp:contra}
Suppose Hypothesis \ref{hp:QV} holds and assume that $P$ and $R$ are $\F$-centric subgroups of $S$ such that $T = P\cap R$ is not $\F$-centric and
the composition 
\begin{equation}\tag{1}
S_{Q,V}(P) \xrightarrow{\Res_T^P} S_{Q, V}(T) \xrightarrow{\Ind_T^R} S_{Q, V}(R)	\end{equation}
is non-zero.
\end{hypothesis}

It is clear that under the assumptions of Hypothesis \ref{hp:contra} the group $T$ is properly contained in both $P$ and $R$; in particular $P$ and $R$ are proper subgroups of $S$.

\begin{lemma}\label{lem:basic.prop}
Suppose Hypothesis \ref{hp:contra} holds. Then 
\begin{enumerate}[(i)]
\item $S_{Q, V}(P)$, $S_{Q, V}(R)$ and $S_{Q, V}(T)$ are all non-zero;
\item $T < P < S$ and $T < R < S$;
\item $Z(S) \leq T$; and
\item $P$ and $R$ are not abelian.
\end{enumerate}
\end{lemma}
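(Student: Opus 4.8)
The plan is to verify the four items in order, each of them following quickly from Hypothesis~\ref{hp:contra}, from the elementary behaviour of the functor $S_{Q,V}$, and from one application of Lemma~\ref{lem:K-abelian}; I do not expect any of the steps to be difficult.

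For (i) I would argue by contradiction. If any one of $S_{Q,V}(P)$, $S_{Q,V}(T)$, $S_{Q,V}(R)$ were the zero module, then the composition~(1) would be a homomorphism out of the zero module, into the zero module, or factoring through the zero module, hence identically zero, contradicting the assumption in Hypothesis~\ref{hp:contra} that it is non-zero. Hence all three modules are non-zero, which also makes part (i) available for use in (iv).

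For (ii) and (iii) I would use only elementary group theory together with the fact that being $\F$-centric is a property of an entire $\F$-conjugacy class (if $P'$ is $\F$-conjugate to $P$ then $P'$ and $P$ have the same set of $\F$-conjugates, so one is $\F$-centric iff the other is). Since $T = P \cap R \leq P$ while $T$ is not $\F$-centric and $P$ is, we must have $T \neq P$, i.e. $T < P$; symmetrically $T < R$. If $P = S$ then $T = S \cap R = R$ would be $\F$-centric, a contradiction, so $P < S$, and likewise $R < S$. For (iii), the centre $Z(S)$ centralises every subgroup of $S$, so $Z(S) \leq C_S(P)$, and since $P$ is $\F$-centric we have $C_S(P) \leq P$; thus $Z(S) \leq P$, and the same reasoning gives $Z(S) \leq R$, whence $Z(S) \leq P \cap R = T$.

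For (iv) I would again argue by contradiction, invoking Lemma~\ref{lem:K-abelian}. Suppose $P$ is abelian. By part (i) we have $S_{Q,V}(P) \neq 0$, so Lemma~\ref{lem:K-abelian} (applied with $K = P$) forces $P$ to be $\F$-conjugate to $Q$. But then, by the conjugacy-invariance of $\F$-centricity noted above, $Q$ would be $\F$-centric, contradicting Hypothesis~\ref{hp:QV}. Hence $P$ is non-abelian, and the identical argument shows $R$ is non-abelian. The only point that needs care throughout is to consistently exploit that ``$\F$-centric'' is a conjugacy-invariant notion; this is exactly what makes the comparisons between $T$, $P$, $R$ and $Q$ go through, and it is what keeps the proof from being completely formal.
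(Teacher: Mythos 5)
Your proof is correct and follows essentially the same route as the paper: part (i) from the non-vanishing of the composition, part (ii) from the conjugacy-invariance of $\F$-centricity, part (iii) from $Z(S)\leq C_S(P)\cap C_S(R)\leq T$, and part (iv) by combining Lemma~\ref{lem:K-abelian} with the fact that $Q$ is not $\F$-centric. The only cosmetic difference is that you spell out in more detail the conjugacy-invariance observation that the paper uses implicitly.
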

\begin{proof}
Part (i) follows from the fact that the composition (\ref{composition})    is non-zero by assumption. Part (ii) is a direct consequence of the fact that $P$ and $R$ are $\F$-centric while $T$ is not $\F$-centric. Since $P$ and $R$ are $\F$-centric, we also get $Z(S) \leq C_S(P) \cap \C_S(R) \leq P \cap R = T$, proving (iii). Finally, aiming for a contradiction, suppose $P$ is abelian. Then $P$ is $\F$-conjugate to $Q$ by Lemma \ref{lem:K-abelian}, contradicting the fact that $P$ is $\F$-centric while $Q$ is not $\F$-centric. The same reasoning works for $R$. Therefore $P$ and $R$ are not abelian and this is (iv). \end{proof}

\begin{lemma}\label{lem:T.contains.centralizers}
    Suppose Hypothesis \ref{hp:contra} holds. Then the following holds:
    \begin{enumerate}[(i)]
        \item $C_S(P) \leq T$;
        \item $C_S(R) \leq T$.
    \end{enumerate}
\end{lemma}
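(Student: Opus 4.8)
The plan is to exploit the Mackey decomposition formula applied to the non-zero composition in Hypothesis \ref{hp:contra}, combined with the self-centralizing condition recorded in Lemma \ref{lem:trace.nonzero} and Lemma \ref{lem:SQV.nonzero}. First I would observe that, by symmetry of the roles of $P$ and $R$ in the statement, it suffices to prove (i), namely $C_S(P)\leq T$; then (ii) follows by applying the contravariant/covariant dual (or by noting that $S_{Q,V}$ of the composition being non-zero is, up to the Isomorphism axiom, symmetric under swapping $P$ and $R$ after passing to the transpose Mackey functor). So the core task is (i).

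For (i), I would argue by contradiction: suppose $C_S(P)\nleq T=P\cap R$. Pick $x\in C_S(P)\setminus T$; since $C_S(P)$ centralizes $P$ and $P$ is $\F$-centric we in fact have $C_S(P)\leq Z(P)\leq P$, so actually $C_S(P)\leq P$ and hence $x\in P\setminus R$. The key point is to analyze $\Res^P_T$ on the summands $V_{\alpha,L}(P)$ of $S_{Q,V}(P)$. By Lemma \ref{lem:basic.prop}(i) we have $S_{Q,V}(P)\neq 0$, and by Definition \ref{def:mackey.SQV}(f), a summand $V_{\alpha,L}(P)$ survives under $\Res^P_T$ only if some $P$-conjugate of $L$ lies in $T$. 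Now $L\in\L(P,Q)$ by Lemma \ref{lem:trace.nonzero}, so $C_P(L)\leq L$; since $C_S(P)\leq Z(P)$ centralizes everything in $P$, in particular $C_S(P)\leq C_P(L)\leq L$. The same computation applied to any $P$-conjugate $\cj yL$ gives $C_S(P)=\cj y{C_S(P)}\leq \cj yL$ (as $C_S(P)\norm S$ is normal, or at least central in $P$). Hence every $P$-conjugate of $L$ that sits inside $T$ must contain $C_S(P)$, forcing $C_S(P)\leq T$ — the desired contradiction, provided at least one summand of $S_{Q,V}(P)$ does survive the restriction to $T$. That last provision is exactly where the hypothesis that the \emph{whole} composition $\Ind_T^R\circ\Res_T^P$ is non-zero enters: if $\Res^P_T$ killed every summand, the composition would be zero.

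The step I expect to be the main obstacle is making the bookkeeping around Definition \ref{def:mackey.SQV}(f) precise: the restriction map records, for each $P$-conjugacy class of $L$'s with an $R$-conjugate (not $P$-conjugate) inside $T$, a diagonal embedding into several summands of $S_{Q,V}(T)$, and I must be careful that a summand of $S_{Q,V}(P)$ is sent to zero \emph{only} when no $P$-conjugate of $L$ meets $T$ — but $T<P$, so "$\cj rL\leq P$ for all $r\in P$'' is automatic, and the real condition is whether $\cj rL\leq T$ for some $r\in P$ (equivalently $r\in N_P(\cdot)$-coset representatives). I would phrase it as: $\Res^P_T$ is non-zero on $V_{\alpha,L}(P)$ iff $L$ has a $P$-conjugate contained in $T$, and in that case that conjugate (being in $\L(T,Q)$ by Lemma \ref{lem:SQV.nonzero}(iii)) still contains the central subgroup $C_S(P)$. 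Combining this with the non-vanishing of $\Res^P_T$ (forced by Hypothesis \ref{hp:contra}) yields $C_S(P)\leq L^x\leq T$ for a suitable $x\in P$, hence $C_S(P)\leq T$, completing (i); then (ii) is symmetric.
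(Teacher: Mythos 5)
Your argument for part~(i) is essentially the paper's: nonvanishing of $\Res_T^P$ forces (via Definition~\ref{def:mackey.SQV}(f) and Lemma~\ref{lem:trace.nonzero}) the existence of some $L\in\L(P,Q)$ with a $P$-conjugate contained in $T$, and since $P$ is $\F$-centric one gets $C_S(P)=Z(P)\leq C_P(L)\leq L\leq T$. The ``argue by contradiction, pick $x\in C_S(P)\setminus T$'' framing is an unnecessary detour (you never actually use $x$), but the reasoning is sound and matches the paper.

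The gap is in your treatment of part~(ii). You assert that (ii) follows from (i) ``by symmetry'' or by passing to the transpose Mackey functor $(M_*,M^*)$, but this does not work as stated. The composition in Hypothesis~\ref{hp:contra} is $\Ind_T^R\circ\Res_T^P$, which is \emph{not} symmetric in $P$ and $R$: restriction and induction play genuinely different roles. If you replace $M=(M^*,M_*)$ by $M'=(M_*,M^*)$, then the composition $(\Ind')_T^{R}\circ(\Res')_T^{P}$ unwinds to $M^*([\iota_T^{R}])\circ M_*([\iota_T^{P}])=\Res_T^{R}\circ\Ind_T^{P}$, which is an entirely different map from $\Ind_T^R\circ\Res_T^P$; the hypothesis gives you no information about it. So the transpose trick does not reduce (ii) to (i). What is true is that (ii) admits a \emph{parallel but separate} argument, dual in spirit rather than in any formal functorial sense: for $\Ind_T^R\circ\Res_T^P$ to be nonzero, the induction $\Ind_T^R$ must be nonzero on some surviving summand $V_{\alpha,L}(T)$, and by Definition~\ref{def:mackey.SQV}(e) the target of $\Ind_T^R|_{V_{\alpha,L}(T)}$ is $V_{\alpha,L}(R)$, which is nonzero only if $C_R(L)\leq L$ by Lemma~\ref{lem:trace.nonzero}. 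Since $R$ is $\F$-centric, this yields $C_S(R)=Z(R)\leq C_R(L)\leq L\leq T$. This is the argument the paper uses, and it is what you would need to write out; ``symmetry'' alone does not supply it.
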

\begin{proof}
    By Lemma \ref{lem:basic.prop}(i) we get that $S_{Q, V}(P)$ and $S_{Q, V}(T)$ are nonzero. Hence Lemma \ref{lem:SQV.nonzero} (i) implies that both $\L(P, Q)$ and $\L(T, Q)$ are nonempty. 
    Aiming for a contradiction, suppose that for all $L \in \L(P, Q)$ we have $L \nleq T$. By Lemma \ref{lem:trace.nonzero} and Definition \ref{def:mackey.SQV}(f) we conclude that $\Res_T^P$ is the zero map, contradicting  Hypothesis \ref{hp:contra}. So there exists $L \in \L(P,Q)$ such that $ L \leq T$. Since $P$ is $\F$-centric, we get 
     \[C_S(P) = Z(P) \leq C_P(L) \leq L \leq T.\]
     So $C_S(P) \leq T$. \\
    To prove part (ii) note that by Lemma \ref{lem:SQV.nonzero} a necessary condition for $\Ind_T^R$ to be nonzero is that there must exist $L \in \L(T,Q)$ such that $L \in \L(R,Q)$. For such subgroup $L$ we then have $\Z(R) \leq C_R(L) \leq L \leq T$ and so $Z(R) \leq T$. But $R$ is $\F$-centric, so we conclude $C_S(R) = Z(R) \leq T$ as desired.
\end{proof}

\begin{lemma}\label{lem:T.abelian}
Suppose Hypothesis \ref{hp:contra} holds and $T$ is abelian. Then
\begin{enumerate}[(i)]
\item $T$ is $\F$-conjugate to $Q$, so  there exists $\alpha \in \Iso_\F(Q,T)$ such that $S_{Q, V}(T) = \cj{\alpha}{V}$; and
\item if  $T < C_K(T) \leq K \leq S$ then $\tr_T^K(S_{Q, V}(T)) = 0$.
\end{enumerate}
\end{lemma}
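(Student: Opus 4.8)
The plan is to deduce (i) directly from Lemma \ref{lem:K-abelian} and then exploit the abelianity of $T$ together with the vanishing of traces across a centralizer to get (ii). For part (i), note that Hypothesis \ref{hp:contra} includes Hypothesis \ref{hp:QV}, so $Q$ is not $\F$-centric, and by Lemma \ref{lem:basic.prop}(i) we know $S_{Q,V}(T)\neq 0$. Since $T$ is assumed abelian, Lemma \ref{lem:K-abelian} applies verbatim: $T$ is $\F$-conjugate to $Q$, and fixing $\alpha\in\Iso_\F(Q,T)$ gives $S_{Q,V}(T)=\cj{\alpha}{V}$. This is essentially immediate.

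For part (ii), suppose $T< C_K(T)\le K\le S$. Because $T$ is abelian, $T\le C_K(T)$, so the hypothesis says $C_K(T)$ strictly contains $T$, and $T\le C_K(T)\le N_K(T)$. I would factor the trace through the intermediate group $C_K(T)$:
\[
\tr_T^K\bigl(S_{Q,V}(T)\bigr)=\tr_{C_K(T)}^{K}\Bigl(\tr_T^{C_K(T)}\bigl(\cj{\alpha}{V}\bigr)\Bigr),
\]
using the description of $S_{Q,V}(T)=\cj{\alpha}{V}$ from (i) and the transitivity of the transfer map. Now I claim $\tr_T^{C_K(T)}(\cj{\alpha}{V})=0$. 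The module $\cj{\alpha}{V}$ carries an action of $\Out_\F(T)$, and the action of the $p$-group $C_K(T)$ on it is obtained via the composite $C_K(T)\to \Aut_K(T)\subseteq\Aut_\F(T)\to\Out_\F(T)$. But every element of $C_K(T)$ centralizes $T$, so it maps to the identity in $\Aut_K(T)$, hence acts trivially on $\cj{\alpha}{V}$; in particular $T$ acts trivially too. Therefore $\tr_T^{C_K(T)}$ is multiplication by the index $[C_K(T):T]$, which is divisible by $p$ since $T<C_K(T)$ and both are $p$-groups. As we work over $\mathbb{F}_p$, this forces $\tr_T^{C_K(T)}(\cj{\alpha}{V})=0$, and hence $\tr_T^K(S_{Q,V}(T))=0$.

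I do not anticipate a serious obstacle here: the argument is a direct reprise of the mechanism already used in the proof of Lemma \ref{lem:trace.nonzero} (where $\tr_L^{LC_P(L)}$ was shown to vanish for exactly the same reason), now applied with $L=T$ and the enlarging group $C_K(T)$ in place of $LC_P(L)$. The only point requiring a little care is making sure that the identification $S_{Q,V}(T)=\cj{\alpha}{V}$ from part (i) is legitimate as an identification of $\mathbb{F}_p\Out_\F(T)$-modules on which $N_K(T)$ acts in the stated way — but this is built into Definition \ref{def:mackey.SQV}(b) once $T$ itself is the relevant self-centralizing $\F$-conjugate of $Q$, which holds because $T$ is abelian and $\F$-conjugate to $Q$.
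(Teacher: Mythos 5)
Your proposal is correct and follows essentially the same path as the paper: part (i) is reduced to Lemma \ref{lem:K-abelian} via Lemma \ref{lem:basic.prop}(i), and part (ii) factors the trace through $C_K(T)$, uses the trivial action of $C_K(T)$ on $\cj{\alpha}{V}$ to see that $\tr_T^{C_K(T)}$ is multiplication by the $p$-power index $[C_K(T):T]$, and concludes over $\mathbb{F}_p$.
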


\begin{proof} Since $T$ is abelian and $S_{Q, V}(T) \neq 0$ by Lemma \ref{lem:basic.prop}, part (i) follows from Lemma \ref{lem:K-abelian}. In particular there exists $\alpha \in \Iso_\F(Q,T)$ and $S_{Q, V}(T) = \cj{\alpha}{V}$. 

If $T < C_K(T) \leq K$, then \[\tr_T^K(\cj \alpha V) = \tr_{C_K(T)}^K \tr_T^{C_K(T)}(\cj \alpha V).\]
Since the action of $C_K(T)$ on $\cj \alpha V$ (a $\mathbb{F}_p\Out_\F(T)$-module) is trivial, by definition of relative trace we obtain 
\[\tr_T^{C_K(T)}(v) = [C_K(T) : T]v,\] 
for all $v \in \cj{\alpha}{V}$. As $C_K(T)$ and $T$ are $p$-groups and we are working in characteristic $p$, the map $\tr_T^{C_K(T)}$ is identically zero. Hence $\tr_T^K(\cj \alpha V) = 0$, proving (ii).
\end{proof}

\begin{lemma}\label{lem:T.abelian.normal}
Suppose Hypothesis \ref{hp:contra} holds and $T$ is abelian. 
\begin{enumerate}[(i)]
\item If $T$ is normal in $U \in \{P,R\}$ then $T = C_U(T)$.
\item If $T$ is normal in both $P$ and $R$ then $\Out_P(T) \neq \Out_R(T)$. 
\end{enumerate}
\end{lemma}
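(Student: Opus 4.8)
The plan is to exploit, in both parts, that the normality hypotheses collapse the pieces of $S_{Q,V}$ relevant to the composition of Hypothesis~\ref{hp:contra} onto a single summand carrying an explicit relative trace, and then to play this against the non-vanishing of that composition.

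First I would fix common notation: since $T$ is abelian and $S_{Q,V}(T)\ne 0$ by Lemma~\ref{lem:basic.prop}, Lemma~\ref{lem:K-abelian} gives $T\in Q^\F$ and $S_{Q,V}(T)=\cj{\alpha}{V}$ for a fixed $\alpha\in\Iso_\F(Q,T)$, and moreover any $L\le T$ with $C_T(L)\le L$ equals $T$, so $V_{\alpha,T}(T)=\cj{\alpha}{V}$ is the only nonzero summand of $S_{Q,V}(T)$. For part~(i), suppose $T\trianglelefteq U$ with $U\in\{P,R\}$, so $T\le C_U(T)\le U$. If $U=R$ then $N_R(T)=R$, and by Definition~\ref{def:mackey.SQV}(e) the map $\Ind_T^R$ acts on $V_{\alpha,T}(T)=\cj{\alpha}{V}$ by $v\mapsto\tr_T^R(v)$, with image in $V_{\alpha,T}(R)=\tr_T^R(\cj{\alpha}{V})$; if $T<C_R(T)$, Lemma~\ref{lem:T.abelian}(ii) forces $\tr_T^R(\cj{\alpha}{V})=0$, so $\Ind_T^R=0$ on $S_{Q,V}(T)$ and the composition of Hypothesis~\ref{hp:contra} vanishes, a contradiction; hence $C_R(T)=T$. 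If $U=P$, then $T\trianglelefteq P$ forces any $L\le P$ admitting a $P$-conjugate inside $T$ to satisfy $L\le T$, so by Definition~\ref{def:mackey.SQV}(f) the map $\Res_T^P$ kills every summand $V_{\gamma,L}(P)$ with $L\nleq T$; since $\Res_T^P\ne 0$ there is a nonzero summand with $L\le T$, and Lemma~\ref{lem:trace.nonzero} together with $L\le T$ abelian gives $T\le C_P(T)\le C_P(L)\le L\le T$, hence $C_P(T)=T$.

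For part~(ii), suppose $T\trianglelefteq P$ and $T\trianglelefteq R$; by part~(i), $C_P(T)=C_R(T)=T$, and since $T$ is abelian $\Inn(T)=1$, so $\Out_P(T)=\Aut_P(T)$ and $\Out_R(T)=\Aut_R(T)$ as subgroups of $\Aut_\F(T)=\Out_\F(T)$ acting on $\cj{\alpha}{V}$. Arguing by contradiction, assume $\Out_P(T)=\Out_R(T)$. As in part~(i), $\Res_T^P$ is zero off the single summand $V_{\alpha,T}(P)=\tr_T^P(\cj{\alpha}{V})$, on which it is the inclusion into $V_{\alpha,T}(T)=\cj{\alpha}{V}$, while $\Ind_T^R$ on $V_{\alpha,T}(T)$ is $v\mapsto\tr_T^R(v)$; hence on $V_{\alpha,T}(P)$ the composition of Hypothesis~\ref{hp:contra} is $w\mapsto\tr_T^R(w)$. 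The crucial point is that, writing $\tr_T^{N_P(T)}$ and $\tr_T^{N_R(T)}$ as sums over $\Aut_P(T)\cong P/T$ and $\Aut_R(T)\cong R/T$ acting on $\cj{\alpha}{V}$ (the kernels $C_P(T)=T=C_R(T)$ acting trivially), the equality $\Aut_P(T)=\Aut_R(T)$ inside $\Out_\F(T)$ yields $\tr_T^R=\tr_T^P$ as endomorphisms of $\cj{\alpha}{V}$. Therefore, for $w=\tr_T^P(u)\in V_{\alpha,T}(P)$ we get $\tr_T^R(w)=\tr_T^P\tr_T^P(u)=|P/T|\,\tr_T^P(u)=0$ in characteristic $p$, as $T<P$ by Lemma~\ref{lem:basic.prop}. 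Thus the composition of Hypothesis~\ref{hp:contra} is zero, a contradiction, and so $\Out_P(T)\ne\Out_R(T)$.

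The step I expect to require the most care is the bookkeeping with Definition~\ref{def:mackey.SQV}(e)--(f): one must check that normality of $T$ genuinely reduces $S_{Q,V}(T)$, $\Res_T^P$ and $\Ind_T^R$ to the single summand $V_{\alpha,T}(\cdot)$ with the stated explicit formulas, and that rewriting $\tr_T^{N_P(T)}$ as a trace over $\Aut_P(T)$ on $\cj{\alpha}{V}$ is legitimate — this is exactly where abelianness of $T$, giving $T\le C_P(T)=\ker(P\to\Aut_P(T))$, enters. Once these reductions are in place, both parts are short.
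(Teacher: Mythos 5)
Your proof is correct and takes essentially the same approach as the paper: reduce everything to the single summand $V_{\alpha,T}(\cdot)$ using abelianness and normality of $T$, show that $T<C_U(T)$ would kill the relevant trace (for part (i)), and in part (ii) identify $\tr_T^R$ with $\tr_T^P$ on $\cj{\alpha}{V}$ via the assumed equality $\Out_P(T)=\Out_R(T)$ and then use that $\tr_T^P\circ\tr_T^P=[P:T]\tr_T^P=0$ in characteristic $p$. The only cosmetic difference is in part (i) for $U=P$: the paper invokes Lemma~\ref{lem:T.abelian}(ii) symmetrically for both $P$ and $R$, whereas you give a slightly different (but equivalent) argument via Lemma~\ref{lem:trace.nonzero} and the chain $T\le C_P(T)\le C_P(L)\le L\le T$; both are fine.
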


\begin{proof}
As $T$ is abelian, Lemma \ref{lem:T.abelian} (i) tells us that there exists an $\F$-isomorphism $\alpha : Q \cong T$ and that $S_{Q,V}(T) = \cj \alpha V$. Let $U\in \{P,R\}$.  As $T \norm U$, we have \[V_{\alpha, T}(U) = \tr_T^U(\cj \alpha V).\]
Applying the definition of simple Mackey functors we see that if $U=P$ then the restriction $\Res_T^P$ is defined as the zero map on those summands $V_{\beta, L}(P)$ with $L \neq T$ and as the inclusion $\tr_T^P(\cj \alpha V) \hookrightarrow \cj \alpha V$ for $L = T$. So the first part of the composition of maps (\ref{composition}) reduces to $\tr_T^P(\cj \alpha V) \hookrightarrow \cj \alpha V$, that is therefore non-zero by assumption. Similarly, if $U=R$ the induction $\Ind_T^R$ coincides with the relative trace map $\cj \alpha V \to \tr_T^R(\cj \alpha V)$, which is again non-zero.

Now, if $T < C_U(T)$ for $U \in \{P, R\}$ then by Lemma \ref{lem:T.abelian} (ii) we have $\tr_T^U(\cj \alpha V) = 0$, contradicting what we just proved. Hence (i) holds.

To prove (ii), suppose $\Out_P(T) = \Out_R(T)$. Note that, as $T$ is abelian and normal in both $P$ and $R$, we have $\Out_P(T) \cong P/T$ and $\Out_R(T) \cong R/T$. Moreover, by what we showed above, the non-zero composition of maps (\ref{composition}) is reduced to 

\[\tr_T^P(\cj \alpha V) \hookrightarrow \cj \alpha V \to \tr_T^R(\cj \alpha V)\]

Now the relative trace of an element $v \in \cj \alpha V$ with respect to the action of $R$ is
\begin{align*}
    \tr_T^R(v) &= \sum_{x \in [R/T]} [c_x|_T] \cdot v \\
    &= \sum_{c_x|_T \in \Out_R(T)} [c_x|_T] \cdot v \\
    &= \sum_{c_y|_T \in \Out_P(T)} [c_y|_T] \cdot v \\
    &= \sum_{y \in [P/T]} [c_y|_T] \cdot v \\
    &= \tr_T^P(v).
\end{align*}
Hence the maps $v \mapsto \tr_T^R(\tr_T^P(v))$ and $v \mapsto \tr_T^P(\tr_T^P(v))$ coincide. This second map however is the zero map as elements of $\tr_T^P(\cj \alpha V)$ are invariant under the action of $\Out_T(P)$, hence 
\[
    \tr_T^P(\tr_T^P(v)) = \sum_{x \in [P/T]} [c_x|_T]\cdot \tr_T^P(v) = \sum_{x \in [P/T]}\tr_T^P(v) = [P : T]\tr_T^P(v) = 0     
\]  
where the last equality is justified by the fact that $[P:T]$ is a power of $p$ and $\cj \alpha V$ is an $\mathbb{F}_p$-vector space. This proves $v \mapsto \tr_T^R(\tr_T^P(v))$ is the zero map thus composition (\ref{composition}) is also zero, a contradiction.  
\end{proof}

\begin{lemma}\label{lem:T.not.center}
Suppose Hypothesis \ref{hp:contra} holds. Then $Z(S) < T$ and $T$ has order at least $p^2$.
\end{lemma}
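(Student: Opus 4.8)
My plan is to prove the two assertions in turn, using the structural constraints already established in Lemmas \ref{lem:basic.prop}, \ref{lem:T.abelian} and \ref{lem:T.abelian.normal}. For the first assertion, $Z(S) < T$, recall from Lemma \ref{lem:basic.prop}(iii) that $Z(S) \leq T$, so it suffices to rule out the equality $Z(S) = T$. Suppose for contradiction that $T = Z(S)$. Then $T$ is abelian (being the center of $S$) and, crucially, $T$ is normal in $S$, hence in particular normal in both $P$ and $R$. We are now exactly in the situation of Lemma \ref{lem:T.abelian.normal}(i), which forces $T = C_P(T) = C_R(T)$. But $T = Z(S)$ is centralized by all of $S$, so $C_P(T) = P$ and $C_R(T) = R$; combined with the previous sentence this gives $P = T = R$, contradicting Lemma \ref{lem:basic.prop}(ii), which asserts $T < P$ and $T < R$. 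Hence $Z(S) \neq T$ and so $Z(S) < T$.

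For the second assertion, that $|T| \geq p^2$, I expect to argue again by contradiction. If $|T| \leq p$, then since $Z(S)$ is nontrivial and $Z(S) \leq T$ by Lemma \ref{lem:basic.prop}(iii), we would be forced to have $|T| = p$ and $T = Z(S)$. But we have just shown $Z(S) < T$, which is incompatible with $T = Z(S)$. Therefore $|T| \geq p^2$, and in particular this is consistent with $Z(S) < T$ since $Z(S)$ could have order $p$ while $T$ has order $p^2$ (or larger).

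The main subtlety here — and the reason the first part cannot simply be dispatched in one line — is that Lemma \ref{lem:T.abelian.normal}(i) must genuinely be invoked: it is \emph{not} immediate that $T$ being normal in $P$ and $R$ leads to a contradiction without using the self-centralizing conclusion $T = C_U(T)$ coming from the non-vanishing of the composition (\ref{composition}). The hypothesis that (\ref{composition}) is non-zero is essential precisely at this point. I do not anticipate any computational obstacle; the argument is entirely a matter of assembling the already-proved lemmas in the right order, with the only care needed being to confirm that $T = Z(S)$ indeed implies $T$ is abelian and normal in $S$ (both are standard: the center of a group is abelian and characteristic) so that Lemma \ref{lem:T.abelian.normal} applies.
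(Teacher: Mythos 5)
Your proof is correct and follows essentially the same approach as the paper: observe that $Z(S)$ is abelian and normal in $S$ (hence in $P$ and $R$), so if $T = Z(S)$ then Lemma \ref{lem:T.abelian.normal}(i) forces $T = C_P(T) = P$, contradicting Lemma \ref{lem:basic.prop}(ii); combining $Z(S) < T$ with $|Z(S)| \geq p$ then gives $|T| \geq p^2$. The paper states this more tersely, but the logical content and the lemmas invoked are identical.
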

\begin{proof}
Note that $Z(S)$ is an abelian normal subgroup of $S$ centralized by $S$. Hence $Z(S) \neq T$ by Lemma \ref{lem:T.abelian.normal}(i) and we conclude using Lemma \ref{lem:basic.prop}(iii).
\end{proof}

\section{The maximal class case}
For $p\geq 5$, the Sylow $p$-subgroups of ${\rm G}_2(p)$ are $p$-groups of maximal nilpotency class. For this reason, we now focus our attention on this class of $p$-groups.

Suppose $S$ is a $p$-group of order $p^n$ having maximal nilpotency class. Recall that the terms of the lower central series for $S$ are defined as
$$\gamma_2(S) := [S,S]; \quad \gamma_i(S) := [\gamma_{i-1},S] \quad \text{ for all } 3 \leq i \leq n.$$
Also, the terms of the upper central series of $S$ are
$$ \Z_1(S) = \Z(S); \quad \Z_i(S)/\Z_{i-1}(S) = \Z( S/Z_{i-1}(S) ) \quad \text{ for all } 2 \leq i \leq n-1.$$
If $|S|\geq p^4$, we set
$$ \gamma_1(S) := \C_S(\gamma_2(S)/\gamma_4(S))$$
and, following the terminology introduced in \cite{GP}, we say that $S$ is exceptional if $\gamma_1(S) \neq \C_S(\Z_2(S))$. Note that both $\gamma_1(S)$ and  $\C_S(\Z_2(S))$ are maximal subgroups of $S$. We refer to \cite{GP} for a detailed analysis of the properties of these groups.

If $p\geq 5$ and $S$ is a Sylow $p$-subgroups of ${\rm G}_2(p)$, then $|S|=p^6$, $\gamma_1(S)$ is extraspecial and $\Z(\gamma_1(S))=\Z(S)$. In particular $S$ is exceptional.

In this section, we assume the following:

\begin{hypothesis}\label{hp:contra.max}
Suppose Hypothesis \ref{hp:contra} holds and $S$ is a $p$-group of maximal nilpotency class.
\end{hypothesis}

A crucial role is played by the characterization of the essential subgroups of saturated fusion systems on maximal calss $p$-groups:

\begin{theorem}\cite[Theorem D]{GP}\label{thm:essentials}
Suppose that $p$ is a prime, $S$ is a $p$-group of maximal class and order at least $p^4$ and $\F$ is a saturated fusion system on $S$. If $E$ is an $\F$-essential subgroup, then either $|E|\leq p^3$, or $E = \gamma_1(S)$ or $E = \C_S(\Z_2(S))$. Furthermore,
if $S$ is exceptional, then $|E|\neq p^3$.
\end{theorem}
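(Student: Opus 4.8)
The plan is to translate the defining property of an $\F$-essential subgroup into structural constraints on $E$ and then to run those against the structure theory of $p$-groups of maximal class. Recall that an $\F$-essential subgroup $E$ may be taken fully $\F$-normalised, is $\F$-centric, and has $\Out_\F(E)$ containing a strongly $p$-embedded subgroup; consequently $E$ is $\F$-radical (that is, $O_p(\Out_\F(E))=1$) and $\Out_S(E)=N_S(E)/E$ is a non-trivial Sylow $p$-subgroup of $\Out_\F(E)$ that is not normal in $\Out_\F(E)$. Since the conclusion allows $|E|\le p^3$, I would assume throughout that $|E|\ge p^4$; as $E$ is proper in $S$, this forces $|S|=p^n$ with $n\ge 5$. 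The goal then becomes to show $E\in\{\gamma_1(S),\C_S(\Z_2(S))\}$ (both of order $p^{n-1}$), and, separately, that no subgroup of order $p^3$ is essential when $S$ is exceptional.

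The pivotal observation is that an essential subgroup \emph{cannot be of maximal class}. Indeed, if $E$ has maximal class and $|E|\ge p^4$, then $E>\gamma_1(E)>\gamma_2(E)>\cdots>1$ is a characteristic chain of $E$ all of whose successive quotients have order $p$; since $\Aut_\F(E)$ preserves each term, the kernel of the action of $\Aut_\F(E)$ on the chain of quotients is a normal $p$-subgroup with $p'$-quotient, so $\Aut_\F(E)$ is $p$-closed and hence $O_p(\Out_\F(E))\ne 1$, contradicting $\F$-radicality. Now I would invoke Blackburn's structure theory of $p$-groups of maximal class: every subgroup of $S$ of order at least $p^4$ which is not contained in $\gamma_1(S)$ is of maximal class, and every maximal subgroup of $S$ other than $\gamma_1(S)$ and $\C_S(\Z_2(S))$ is of maximal class (note that $\C_S(\Z_2(S))$ itself is never of maximal class, since $\Z_2(S)\le\Z(\C_S(\Z_2(S)))$ has order at least $p^2$). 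Together with the pivotal observation, this leaves only two configurations to examine: subgroups $E$ with $p^4\le|E|<p^{n-1}$ contained in $\gamma_1(S)$, and the two subgroups $\gamma_1(S)$ and $\C_S(\Z_2(S))$ themselves, which are therefore the only maximal subgroups that can be essential.

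For a subgroup $E\le\gamma_1(S)$ with $p^4\le|E|<p^{n-1}$ and $E\ne\gamma_1(S)$, $\F$-centricity forces $\Z(\gamma_1(S))\le C_S(E)\le E$; examining how $E$ sits with respect to the commutator structure of $\gamma_1(S)$ and how a chosen element of $S\setminus\gamma_1(S)$ acts on $\gamma_1(S)$ should show that either $E$ fails to be $\F$-centric, or $E$ is trapped between two characteristic subgroups of $S$ and hence fails to be $\F$-radical by an argument like that of the second paragraph. This establishes the first assertion. Finally, suppose $S$ is exceptional, so $\gamma_1(S)$ is extraspecial with $\Z(\gamma_1(S))=\Z(S)$; I would run through the $\F$-centric subgroups $E$ of order $p^3$ according to their isomorphism type and, for each, show that $\Aut(E)$ is either $p$-closed (whence $O_p(\Out_\F(E))\ne 1$ and $E$ is not $\F$-radical) or has $\Out_S(E)$ as a full Sylow $p$-subgroup while containing no subgroup with trivial $O_p$ that both admits a strongly $p$-embedded subgroup and contains $\Out_S(E)$; in either case $E$ is not essential, so $|E|\ne p^3$.

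The genuine difficulty is concentrated in the last two paragraphs: deciding which subgroups of a maximal class $p$-group of each order are self-centralising, pinning down the normal and isomorphism structure of its maximal subgroups, and --- in the exceptional situation --- classifying the $\F$-centric subgroups of order $p^3$ together with their automizers precisely enough to rule out a strongly $p$-embedded subgroup. The reduction of ``essential'' to centric + radical + strongly $p$-embedded used at the start and the $p$-closure argument for maximal class subgroups are routine once this group theory is in hand; I expect the exceptional-case analysis of order $p^3$ to be the most delicate step.
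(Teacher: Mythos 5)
The statement you are asked to prove is cited in the paper as \cite[Theorem D]{GP} and is not reproved there, so you are in effect being asked to reconstruct a nontrivial external theorem. Your opening move is genuinely the right one and is a standard device: if $E$ is essential with $|E|\ge p^4$ and $E$ has maximal class, then the chain $E>\gamma_1(E)>\gamma_2(E)>\cdots>1$ consists of characteristic subgroups with all successive quotients of order $p$, so the stabiliser of this chain is a normal Sylow $p$-subgroup of $\Aut_\F(E)$, forcing $O_p(\Out_\F(E))\supseteq\Out_S(E)\ne 1$ and contradicting radicality. (Note that including $\gamma_1(E)$ in the chain is essential here; the lower central series alone has top factor of order $p^2$, whose automiser $\GL_2(p)$ is not a $p'$-group, so the argument would not close.)

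However, the structural dichotomy you then invoke is internally inconsistent. You assert that every subgroup of order $\ge p^4$ not contained in $\gamma_1(S)$ has maximal class, and simultaneously that $\C_S(\Z_2(S))$ never has maximal class. In the exceptional case $\C_S(\Z_2(S))\ne\gamma_1(S)$, so $\C_S(\Z_2(S))$ is a maximal subgroup not contained in $\gamma_1(S)$, and the two claims contradict one another. The correct version of Blackburn's result restricts to subgroups of index at least $p^2$ in $S$; for maximal subgroups a separate statement is needed, and even then the assertion ``all maximal subgroups other than $\gamma_1(S)$ and $\C_S(\Z_2(S))$ have maximal class'' is delicate and fails without hypotheses on $n$ relative to $p$.

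More seriously, the two cases where the theorem's actual content lives --- non-maximal subgroups $E\le\gamma_1(S)$ with $p^4\le|E|<p^{n-1}$, and subgroups of order $p^3$ when $S$ is exceptional --- are dispatched with ``should show'' and ``I would run through.'' These are not arguments; they are the places where the proof in \cite{GP} does real work, analysing how $E$ meets the terms $\gamma_i(S)$, distinguishing abelian from nonabelian $\gamma_1(S)$, and in the exceptional case exploiting that $\gamma_1(S)$ is extraspecial to pin down the candidate order-$p^3$ subgroups and their automisers. The exceptional-case exclusion of $|E|=p^3$ is the hardest and most specific part of the theorem (it is what makes the application to $\mathrm{G}_2(p)$ go through in the present paper), and your sketch offers no concrete path through it. As it stands, the proposal correctly identifies the skeleton and the key radicality lemma, but would not compile into a proof without substantial additional group-theoretic input that it neither supplies nor points to precisely.
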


Thanks to the work of D\'iaz and Park \cite{DP}, we can exclude the case in which the group $\gamma_1(S)$ is abelian:

\begin{lemma}\label{gamma1.not.ab}
Suppose Hypothesis \ref{hp:contra.max} holds. Then $\gamma_1(S)$ is not abelian.
\end{lemma}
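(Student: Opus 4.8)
The plan is to deduce this immediately from the theorem of D\'iaz and Park on fusion systems over $p$-groups with an abelian maximal subgroup, by a proof by contradiction. So suppose $\gamma_1(S)$ is abelian. Under Hypothesis \ref{hp:contra.max} we have $Z(S) < T < P < S$ with $|T| \geq p^2$ (Lemma \ref{lem:T.not.center}), so $|S| \geq p^4$ and $\gamma_1(S)$ is genuinely defined; moreover, as recalled in this section, $\gamma_1(S)$ is a maximal subgroup of $S$. Hence $S$ contains an abelian subgroup of index $p$.

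Next I would invoke \cite[Theorem C]{DP}. In establishing that the sharpness conjecture holds for every saturated fusion system on a $p$-group admitting an abelian subgroup of index $p$, D\'iaz and Park verify the hypothesis of Proposition \ref{prop:trunc} for every Mackey functor, and in particular for each simple Mackey functor $S_{Q,V}$: for $\F$ a saturated fusion system on such an $S$, $Q \leq S$ not $\F$-centric, $V$ a simple $\mathbb{F}_p\Out_\F(Q)$-module, and $\F$-centric subgroups $P, R \leq S$ with $P \cap R$ not $\F$-centric, the composition
\[
S_{Q,V}(P) \xrightarrow{\Res_{P\cap R}^P} S_{Q,V}(P\cap R) \xrightarrow{\Ind_{P\cap R}^R} S_{Q,V}(R)
\]
is the zero map.

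Finally I would apply this to the data of Hypothesis \ref{hp:contra}: $\F$ is a saturated fusion system on $S$ (which now has an abelian maximal subgroup), $Q$ is not $\F$-centric, $V$ is simple, and $P, R$ are $\F$-centric with $T = P \cap R$ not $\F$-centric --- exactly the configuration above. So composition $(1)$ is zero, contradicting the standing assumption of Hypothesis \ref{hp:contra} that it is non-zero. This contradiction shows that $\gamma_1(S)$ is not abelian.

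I expect the only point needing care to be bibliographic rather than mathematical: one must cite the exact form of the D\'iaz--Park result that produces the vanishing of the composite for the simple Mackey functors $S_{Q,V}$ (the input to their Proposition 4.4, reproduced here as Proposition \ref{prop:trunc}), rather than merely the $\lim$-acyclicity conclusion, and confirm that ``abelian maximal subgroup'' coincides with Oliver's hypothesis of ``abelian subgroup of index $p$'', which is immediate since $[S:\gamma_1(S)] = p$. No structural analysis of $S$ beyond the facts already recalled in this section is required here.
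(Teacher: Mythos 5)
Your proof is correct and takes essentially the same route as the paper: reduce to the case of an abelian maximal subgroup $\gamma_1(S)$ and then cite the proof of D\'iaz--Park's Theorem C to conclude that the composite (\ref{composition}) vanishes, contradicting Hypothesis \ref{hp:contra.max}. The paper's proof is just a terser version of the same argument.
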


\begin{proof}
The proof of \cite[Theorem C]{DP} implies that the composition
\begin{equation*}
S_{Q,V}(P) \xrightarrow{\Res_T^P} S_{Q, V}(T) \xrightarrow{\Ind_T^R} S_{Q, V}(R)	\end{equation*}
is zero, contradicting our assumptions.
\end{proof}

\begin{lemma}\label{lem:Tnorm}
Suppose Hypothesis \ref{hp:contra.max} holds. If $T\norm S$ then 
\begin{enumerate}[(i)]
\item $T= \gamma_i(S)$ for some $i\geq 2$; 
\item $\gamma_{i-1}(S) \leq C_S(T)$ is abelian (so in particular $T$ is abelian); and
\item $C_S(T)$ is not a maximal subgroup of $S$.
\end{enumerate}
In particular $T\neq Z_2(S)$ and $T\neq \gamma_2(S)$.
\end{lemma}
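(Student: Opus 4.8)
The plan is to combine the structural constraints established in Section~3 with the classical structure theory of $p$-groups of maximal class. First I would observe that Hypothesis~\ref{hp:contra.max} forces $|S|\geq p^4$: by Lemma~\ref{lem:T.not.center} we have $|T|\geq p^2$, and by Lemma~\ref{lem:basic.prop}(ii) $T<P<S$, so $|S|>|P|>|T|\geq p^2$. Write $|S|=p^n$. Since $T\norm S$, the subgroup $T$ is fully normalized in $\F$, hence fully centralized, and as $T$ is not $\F$-centric this yields $C_S(T)\not\leq T$. Moreover $T$ has index at least $p^2$ in $S$, so it is not a maximal subgroup; since a normal subgroup of a $p$-group of maximal class that is neither the whole group nor maximal must be a term of the lower central series, $T=\gamma_i(S)$ for some $i$, and $p^2\leq|T|=p^{n-i}\leq p^{n-2}$ gives $2\leq i\leq n-2$. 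This proves~(i).

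Next I would determine $C_S(T)$, which is normal in $S$. It is not $S$ itself, since $C_S(T)=S$ would give $T\leq Z(S)$, against Lemma~\ref{lem:T.not.center}. I claim it is not maximal either, which is exactly part~(iii). Suppose it were. If $T$ were non-abelian then $T\not\leq C_S(T)$, so $TC_S(T)=S$ by maximality, and thus $T/Z(T)\cong S/C_S(T)$ would be cyclic of order $p$, which is impossible. Hence $T$ is abelian; being normal in $S$ it is normal in $P$ and in $R$, so Lemma~\ref{lem:T.abelian.normal}(i) gives $C_P(T)=T=C_R(T)$, and therefore $\Aut_P(T)$ and $\Aut_R(T)$ are non-trivial subgroups of $\Aut_S(T)=S/C_S(T)$, a group of order $p$. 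It follows that $\Aut_P(T)=\Aut_R(T)$, i.e.\ (as $T$ is abelian) $\Out_P(T)=\Out_R(T)$, contradicting Lemma~\ref{lem:T.abelian.normal}(ii). So $C_S(T)$ is not maximal, whence (again by the structure theory) $C_S(T)=\gamma_j(S)$ for some $j\geq 2$; finally $C_S(T)\not\leq T=\gamma_i(S)$ forces $\gamma_j(S)\supsetneq\gamma_i(S)$, that is $j\leq i-1$.

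Part~(ii) then follows quickly: $j\leq i-1$ gives $\gamma_{i-1}(S)\leq\gamma_j(S)=C_S(T)$ and also $T=\gamma_i(S)\leq\gamma_j(S)=C_S(T)$, the latter showing $T$ is abelian; and choosing $x\in\gamma_{i-1}(S)\setminus\gamma_i(S)$, one has $\gamma_{i-1}(S)=\gamma_i(S)\langle x\rangle$ because $\gamma_{i-1}(S)/\gamma_i(S)$ is cyclic of order $p$, so since $x\in C_S(\gamma_i(S))$ centralizes the abelian group $\gamma_i(S)$ the group $\gamma_{i-1}(S)=\gamma_i(S)\langle x\rangle$ is abelian. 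For the two final assertions: if $T=\gamma_2(S)$ then $\gamma_1(S)=\gamma_{i-1}(S)$ is abelian, contradicting Lemma~\ref{gamma1.not.ab}; and if $T=Z_2(S)=\gamma_{n-2}(S)$ then $C_S(T)=C_S(Z_2(S))$ is a maximal subgroup of $S$, contradicting part~(iii).

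The step I expect to be the main obstacle is disposing of the case ``$C_S(T)$ maximal'' in part~(iii): there one must first extract the non-obvious fact that $T$ is abelian from the maximality assumption alone before Lemma~\ref{lem:T.abelian.normal} becomes available, and one has to be careful with the full-normalization/full-centralization bookkeeping so that ``$T$ is not $\F$-centric'' genuinely yields $C_S(T)\not\leq T$. Everything else amounts to routine arithmetic in the lower central series of $S$, together with the classical classification of the normal subgroups of a $p$-group of maximal class (for which one can cite \cite{GP} and the references therein).
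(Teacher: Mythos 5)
Your proof is correct, and the overall strategy (classify $T$ via the lower central series, exploit ``$T$ fully normalized but not $\F$-centric'' to get $C_S(T)\not\leq T$, and use Lemma~\ref{lem:T.abelian.normal} to rule out $C_S(T)$ maximal) is the same as the paper's. The one genuine structural difference is the order of parts (ii) and (iii): the paper proves (ii) first, invoking the structural fact that $C_S(\gamma_i(S))$ is of the form $\gamma_j(S)$ to deduce $\gamma_{i-1}(S)\leq C_S(T)$ is abelian, and only then uses the abelianness of $T$ in the proof of (iii). You instead prove (iii) first, which forces you to supply an extra argument to dispose of the possibility that $T$ is non-abelian when $C_S(T)$ is maximal (the $TC_S(T)=S$, hence $T/Z(T)$ cyclic of order $p$, step); with (iii) in hand, $C_S(T)=\gamma_j(S)$ with $j\geq 2$ follows cleanly, and (ii) drops out. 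Your ordering is arguably tidier in that it avoids leaning on the unproved claim that $C_S(\gamma_i(S))$ is always a $\gamma_j(S)$ even when that centralizer is a maximal subgroup (the paper's justification there is terse); the price is the short extra abelian-vs-non-abelian case split. Both routes are sound and of comparable length.
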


\begin{proof}
First note that the assumption $T\norm S$ implies that the group $T$ is not $S$-centric. Indeed $T$ is fully centralized in $\F$ \cite[Theorem 5.2]{RS} and if it were $S$-centric then $C_S(T) = Z(T)$ and, for any $\F$-conjugate $T'$ of $T$, we would have $|C_S(T)| \geq |C_S(T')| \geq |Z(T')| = |Z(T)| = |C_S(T)|$, and so  $C_S(T') = Z(T')\leq T'$, implying that $T$ is $\F$-centric, a contradiction. 

Since $S$ has maximal nilpotency class and $T$ is normal in $S$ but not a maximal subgroup of $S$ by Lemma \ref{lem:basic.prop} (ii), we deduce that $T= \gamma_i(S)$ for some $i\geq 2$, and this is statement (i). Note that $C_S(\gamma_i(S)) = \gamma_j(S)$ for some $j\geq 1$.
 Since $T$ is not $S$-centric, we get $\gamma_{i-1}(S) \leq C_S(T)$ and so $\gamma_{i-1}(S)$ is abelian, proving (ii).
Now, aiming for a contradiction, suppose $C_S(T)$ is a maximal subgroup of $S$. Thus $|\Aut_S(T)| = [S \colon C_S(T)]  = p$. Since $T$ is abelian and normal in $S$, by Lemma \ref{lem:T.abelian.normal} (i) we get $T=C_P(T)=C_R(T)$. Thus $\Aut_P(T) \neq 1 \neq \Aut_R(T)$, giving
\[\Aut_P(T) = \Aut_S(T) = \Aut_R(T).\]
This implies $\Out_P(T) = \Out_R(T)$ contradicting Lemma \ref{lem:T.abelian.normal} (ii). Hence  (iii) holds.

For the chaser of the Lemma, note that $C_S(Z_2(S))$ is a maximal subgroup of $S$ (contrary to the statement of part (iii)) and if $T=\gamma_2(S)$ then by part (ii) the group $\gamma_1(S)$ should be abelian, contradicting Lemma \ref{gamma1.not.ab}. 
\end{proof}

\begin{lemma}\label{lem:T-in-W}
Suppose Hypothesis \ref{hp:contra.max} holds. Then either $T \leq \gamma_1(S)$ or $T \leq C_S(Z_2(S))$.
\end{lemma}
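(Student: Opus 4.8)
The plan is to leverage the characterization of essential subgroups (Theorem \ref{thm:essentials}) together with the fact that $T$ is not $\F$-centric to force $T$ inside one of the two distinguished maximal subgroups $\gamma_1(S)$ or $\C_S(\Z_2(S))$. First I would recall that, since $P$ and $R$ are $\F$-centric (hence in particular not $\F$-conjugate to $Q$) and $T = P\cap R$ is \emph{not} $\F$-centric, Alperin's fusion theorem for saturated fusion systems applies: the isomorphism realizing the ``motion'' away from $T$ being self-centralizing must be built from automorphisms of essential subgroups and of $S$ itself. Concretely, since $T$ is not $\F$-centric, there is an $\F$-conjugate $T'$ of $T$ with $\C_S(T')\nleq T'$; the chain of essential (or $S$-) subgroups through which the relevant conjugating morphism factors gives an essential subgroup $E$ (or $S$) with $T\leq E$. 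If $E=S$ this typically places $T$ normal in $S$, a case already analyzed in Lemma \ref{lem:Tnorm}, which tells us $T=\gamma_i(S)$ for some $i\geq 2$; since $S$ has maximal class and is exceptional when $|S|=p^6$, every such $\gamma_i(S)$ with $i\geq 3$ lies in both $\gamma_1(S)$ and $\C_S(\Z_2(S))$ (both maximal, hence containing $\gamma_2(S)\supseteq\gamma_3(S)\supseteq\cdots$), and the cases $T=\gamma_2(S)$ and $T=\Z_2(S)$ were excluded in Lemma \ref{lem:Tnorm}.

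So the substantive case is when $T\leq E$ for a proper $\F$-essential subgroup $E$. By Theorem \ref{thm:essentials}, either $|E|\leq p^3$ — and since $S$ has maximal class of order at least $p^4$, one checks $|E|=p^2$ forces $E\leq\gamma_2(S)$, which is contained in both distinguished maximal subgroups (and $|E|=p^3$ cannot occur in the exceptional case relevant to $\rm{G}_2(p)$, though we should argue it generally or note it is handled by the $\gamma_2(S)$ containment when it does occur in non-exceptional maximal class groups) — or $E=\gamma_1(S)$ or $E=\C_S(\Z_2(S))$, in which case $T\leq E$ is exactly the desired conclusion. The point of delicacy is the small-essential case $|E|\leq p^3$: I would argue that a maximal class $p$-group has the property that every subgroup of order $\leq p^3$ that is ``large enough'' to be essential (in particular, self-centralizing-radical) is contained in $\gamma_2(S)$ — using that in a maximal class group of order $\geq p^4$, a subgroup not contained in $\gamma_2(S)$ together with $\gamma_2(S)$ generates a subgroup of index $p$ with abelian-ish structure, constraining centralizers — and $\gamma_2(S)\leq\gamma_1(S)\cap\C_S(\Z_2(S))$.

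The main obstacle I anticipate is making the Alperin-factorization step fully rigorous: precisely, extracting from ``$T$ not $\F$-centric, $T=P\cap R$ with $P,R$ $\F$-centric'' a specific essential subgroup $E$ with $T\leq E$, rather than merely a morphism not arising from $S$-conjugation. The cleanest route is probably to take $\F$-conjugates $P'$ of $P$ and apply the $\F^c$-conjugacy machinery to $T<P$: since $T$ is not fully centralized in some conjugate, there is $g\in\C_S(T'')\setminus T''$ for a suitable conjugate $T''$, and $\langle T'', g\rangle$ sits inside an essential subgroup by the standard ``essential subgroups control non-$S$-fusion'' argument. A secondary obstacle is confirming that in the general maximal-class setting (not just $\rm{G}_2(p)$) the order-$p^2$ and order-$p^3$ essential cases genuinely land in $\gamma_2(S)$; here I would invoke the detailed structural analysis of \cite{GP} cited in the text, and if necessary restrict the lemma's proof to reuse Lemma \ref{lem:Tnorm} together with the dichotomy $|E|\leq p^2$ versus $E$ maximal. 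Once $T$ is inside some essential or normal subgroup, a short bookkeeping check against the list $\{\gamma_1(S),\C_S(\Z_2(S))\}$ and their common core $\gamma_2(S)$ finishes the proof.
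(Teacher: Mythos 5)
Your overall strategy — ``use Theorem \ref{thm:essentials} and Alperin--Goldschmidt to locate $T$ inside a distinguished subgroup'' — points in a reasonable direction, but the central step of your plan does not go through, and it is not the route the paper takes. The gap is your claim that because $T$ is not $\F$-centric, Alperin's theorem produces an essential subgroup $E$ (or $S$) \emph{containing $T$}, and that in the case ``$E=S$'' this places $T$ normal in $S$. Neither implication holds. What Alperin--Goldschmidt gives is that any $\F$-morphism on $T$ factors as a composite of restrictions of automorphisms of essential subgroups and of $S$, where the intermediate subgroups contain successive \emph{conjugates} of $T$, not $T$ itself. In particular, if $T$ sits inside no essential subgroup, the conclusion is merely that every $\F$-conjugate of $T$ is obtained by restricting an element of $\Aut_\F(S)$; it does \emph{not} follow that $T\norm S$. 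Your plan then has no way to finish this branch, and the small-essential branch ($|E|\leq p^3$) you only sketch, with a speculative containment $E\leq\gamma_2(S)$ that you do not substantiate.

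The paper argues differently, by contradiction: suppose $T\nleq\gamma_1(S)$ and $T\nleq\C_S(\Z_2(S))$. Lemma \ref{lem:T.not.center} gives $\Z(S)<T$, so $|P|,|R|\geq p^3$; since $P,R\nleq\gamma_1(S)$, a structural lemma about maximal class groups (\cite[Lemma 2.12]{pearls}) forces $\Z_2(S)\leq T$, and Lemma \ref{lem:Tnorm} rules out $T=\Z_2(S)$, so $|T|\geq p^3$. Then Theorem \ref{thm:essentials} implies $T$ lies in no $\F$-essential (the size rules out order $\leq p^3$, the hypothesis rules out $\gamma_1(S)$ and $\C_S(\Z_2(S))$), so all $\F$-conjugates of $T$ arise from $\Aut_\F(S)$. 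The decisive ingredient you are missing is then \cite[Lemma 3.4]{GP}: any $t\in T\setminus(\gamma_1(S)\cup\C_S(\Z_2(S)))$ has $C_S(t)=\langle\Z(S),t\rangle\leq T$, whence $C_S(T)\leq T$, i.e.\ $T$ is $S$-centric. Because $\gamma_1(S)$ and $\C_S(\Z_2(S))$ are characteristic, every $\Aut_\F(S)$-conjugate of $T$ has the same property and is therefore also $S$-centric, making $T$ $\F$-centric — contradicting Hypothesis \ref{hp:contra.max}. You will want to replace the ``Alperin gives $E\geq T$'' step with this centralizer-of-an-element argument; without it the proposal does not close.
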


\begin{proof} Aiming for a contradiction, suppose $T \nleq \gamma_1(S)$ and $T \nleq C_S(Z_2(S))$. By Lemma \ref{lem:T.not.center}
 we have $Z(S) < T$ and $|T|\geq p^2$. So $P$ and $R$ are subgroups of  $S$ of order at least $p^3$ not contained in $\gamma_1(S)$. By \cite[Lemma 2.12]{pearls} we deduce that $Z_2(S) \leq P \cap R = T$. Note that $Z_2(S) \neq T$ by Lemma \ref{lem:Tnorm}, so $|T| \geq p^3$. In particular, Theorem \ref{thm:essentials} and the fact that $T$ is not $\F$-centric imply that $T$ is not contained in any $\F$-essential subgroup of $S$ and so, by the Alperin-Goldschmidt's fusion theorem (\cite[Theorem I.3.5]{AKO}), all $\F$-conjugates of $T$ can be found applying automorphisms of $\Aut_\F(S)$.

Let $t \in T \setminus (\gamma_1(S) \cup \C_S(Z_2(S)))$. By \cite[Lemma 3.4]{GP} we have $C_S(t) = \langle Z(S), t\rangle$ and so by Lemma \ref{lem:basic.prop} (iii) we get
\[C_S(T) \leq C_S(t) = \langle Z(S), t\rangle \leq T.\] 
Thus $T$ is $S$-centric. Since $Z(S)$, $\gamma_1(S)$ and $C_S(Z_2(S))$ are characteristic subgroups of $S$ we deduce that all $\F$-conjugates of $T$ contain $Z(S)$ and are not contained in $\gamma_1(S)$ or $C_S(Z_2(S))$. The same reasoning used for $T$ then shows that all its $\F$-conjugates are $S$-centric, impying that $T$ is $\F$-centric, a contradiction. This proves the statement.
\end{proof}

\begin{lemma}\label{lem:T.not.S.centric}
   Suppose Hypothesis \ref{hp:contra.max} holds and $\gamma_1(S)$ is extraspecial. Then the following holds
\begin{enumerate}
\item either $T \norm S$ or $T\leq \gamma_1(S)=\N_S(T)$; and 
\item $T$ is not $S$-centric.
\end{enumerate}
\end{lemma}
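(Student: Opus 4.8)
The plan is to reduce first to the case $S$ exceptional, then to prove (1) via the dichotomy of Lemma~\ref{lem:T-in-W}, and finally to deduce (2) from (1).

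\emph{Reduction.} Since $\gamma_1(S)$ is extraspecial, $Z(\gamma_1(S))=Z(S)$ has order $p$, whereas $Z_2(S)$ has order $p^2$ and $C_S(Z_2(S))$ is a maximal subgroup of $S$. If we had $\gamma_1(S)=C_S(Z_2(S))$ then $Z_2(S)\leq Z(\gamma_1(S))=Z(S)$, a contradiction; hence $\gamma_1(S)\neq C_S(Z_2(S))$, i.e.\ $S$ is exceptional, so by Theorem~\ref{thm:essentials} every $\F$-essential subgroup of $S$ has order at most $p^2$ or equals $\gamma_1(S)$ or $C_S(Z_2(S))$.

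\emph{Proof of (1).} By Lemma~\ref{lem:T-in-W}, $T\leq\gamma_1(S)$ or $T\leq C_S(Z_2(S))$. Suppose first $T\leq\gamma_1(S)$. As $\gamma_1(S)$ is extraspecial it has nilpotency class $2$, so $[\gamma_1(S),T]\leq[\gamma_1(S),\gamma_1(S)]=Z(S)\leq T$, the last inclusion by Lemma~\ref{lem:basic.prop}(iii); thus $T\norm\gamma_1(S)$ and $\gamma_1(S)\leq N_S(T)$. Since $\gamma_1(S)$ is maximal in $S$, either $N_S(T)=S$, so $T\norm S$, or $N_S(T)=\gamma_1(S)$, so $T\leq\gamma_1(S)=N_S(T)$; in either case (1) holds. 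Suppose now $T\leq C_S(Z_2(S))$ but $T\not\leq\gamma_1(S)$; we claim this cannot occur. If it did, then $T\not\norm S$ — otherwise $T=\gamma_i(S)\leq\gamma_1(S)$ for some $i\geq 2$ by Lemma~\ref{lem:Tnorm}(i) — so $P\not\leq\gamma_1(S)$ and $R\not\leq\gamma_1(S)$ (because $T\leq P\cap R$), and both have order at least $p^3$ by Lemma~\ref{lem:basic.prop}(ii), whence \cite[Lemma~2.12]{pearls} gives $Z_2(S)\leq P\cap R=T$; moreover $T\neq Z_2(S)$ (as $T$ is not normal in $S$), so $|T|\geq p^3$. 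By the Reduction, no $\F$-essential of order at most $p^2$ can contain an $\F$-conjugate of $T$, and $T\not\leq\gamma_1(S)$; hence, by the Alperin--Goldschmidt fusion theorem \cite[Theorem~I.3.5]{AKO}, every $\F$-conjugate of $T$ is obtained by applying a composite of restrictions of automorphisms lying in $\Aut_\F(S)$ and $\Aut_\F(C_S(Z_2(S)))$. Using that $Z(S)$, $\gamma_1(S)$ and $C_S(Z_2(S))$ are characteristic in $S$, that $Z_2(S)$ and $\gamma_1(S)\cap C_S(Z_2(S))$ are characteristic in $C_S(Z_2(S))$, and the description in \cite{GP} of the centralizer of an element of $C_S(Z_2(S))\setminus\gamma_1(S)$, an induction on the length of such a composite shows that every $\F$-conjugate $T'$ of $T$ again satisfies $Z_2(S)\leq T'\leq C_S(Z_2(S))$ and $T'\not\leq\gamma_1(S)$, and hence $C_S(T')\leq C_S(t')\leq T'$ for any $t'\in T'\setminus\gamma_1(S)$. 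Thus $T$ is $\F$-centric, contradicting Hypothesis~\ref{hp:contra}. This proves (1).

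\emph{Proof of (2).} By (1), either $T\norm S$ or $T\leq\gamma_1(S)=N_S(T)$. If $T\norm S$, then exactly as shown at the beginning of the proof of Lemma~\ref{lem:Tnorm} the group $T$ is fully centralized in $\F$, so if $T$ were $S$-centric it would be $\F$-centric, contrary to Hypothesis~\ref{hp:contra}; hence $T$ is not $S$-centric. Suppose instead $T\leq\gamma_1(S)=N_S(T)$ and, for a contradiction, that $T$ is $S$-centric, so $C_S(T)=Z(T)\leq T\leq\gamma_1(S)$ and therefore $C_S(T)=C_{\gamma_1(S)}(T)$: that is, $T$ is self-centralizing in the extraspecial group $\gamma_1(S)$. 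Regarding $\gamma_1(S)/Z(S)$ as the $\mathbb{F}_p$-space carrying the nondegenerate alternating form induced by the commutator map (a form with respect to which $\Aut_\F(\gamma_1(S))$, and conjugation by elements of $S$, act by similitudes, since $Z(S)=Z(\gamma_1(S))$ has order $p$), self-centrality of $T$ forces $T/Z(S)$ to be a self-perpendicular subspace; in particular $T$ is abelian. One now argues as in the last step of the proof of (1): combining the Alperin--Goldschmidt theorem, the fact that $\Aut_\F(S)$ and $\Aut_\F(\gamma_1(S))$ preserve self-perpendicular subspaces of $\gamma_1(S)/Z(S)$, and the structure of $C_S(Z_2(S))$ and of element-centralizers outside $\gamma_1(S)$ from \cite{GP}, one shows every $\F$-conjugate of $T$ is self-centralizing in $S$, so $T$ is $\F$-centric, again contradicting Hypothesis~\ref{hp:contra}. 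Hence $T$ is not $S$-centric.

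\emph{Main obstacle.} The routine parts are the first paragraphs of (1) and (2). The hard part is controlling the $\F$-conjugacy class of $T$ through the Alperin--Goldschmidt theorem when $C_S(Z_2(S))$ (or $\gamma_1(S)$) is itself $\F$-essential; this forces us to extract from \cite{GP} the fine structure of the maximal subgroups of an exceptional maximal-class $p$-group — which of their subgroups are characteristic, and the centralizers of elements lying outside $\gamma_1(S)$ — and to track these data along composites of essential automorphisms.
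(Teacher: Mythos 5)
Your proposal takes a substantially different and considerably more laborious route than the paper, and the route you choose has real gaps.

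For part (1), the paper proves the dichotomy in one line by citing \cite[Lemma 3.7]{GP}: once one knows (from Lemmas~\ref{lem:T.not.center} and \ref{lem:T-in-W}) that $Z(S) < T \subseteq \gamma_1(S)\cup C_S(Z_2(S))$, that lemma --- a purely group-theoretic statement about maximal-class $p$-groups, with no fusion input --- already gives ``$N_S(T)\leq\gamma_1(S)$ or $T\norm S$,'' after which the refinement $\gamma_1(S)=N_S(T)$ follows exactly as you argue in your Case~1. Your Case~2 attempts to rule out $T\leq C_S(Z_2(S))$, $T\nleq\gamma_1(S)$ by an Alperin--Goldschmidt conjugacy-tracking argument, and this is where the gap sits: you assert $C_S(t')\leq T'$ for $t'\in T'\setminus\gamma_1(S)$ by appealing to ``the description in \cite{GP} of the centralizer of an element of $C_S(Z_2(S))\setminus\gamma_1(S)$,'' but the result the paper actually uses for this kind of computation, \cite[Lemma 3.4]{GP}, concerns elements lying outside \emph{both} maximal subgroups $\gamma_1(S)$ and $C_S(Z_2(S))$. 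For $t'$ inside $C_S(Z_2(S))$ the centralizer contains $\langle t', Z_2(S)\rangle$ and is generally larger, so the inclusion $C_S(t')\leq T'$ is not something you can cite; it would have to be proved, and you do not prove it. (You yourself flag this as ``the hard part.'')

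For part (2), the paper's argument is again shorter and of a different flavour: assuming $T$ is $S$-centric it passes to a fully $\F$-normalized conjugate $T'$, shows $T'\norm S$ and $T'\leq\gamma_1(S)$, compares $|C_S(T)|$ with $|C_S(T')|$ using the symplectic structure of the extraspecial group $\gamma_1(S)$, and concludes $T'$ is $S$-centric and fully normalized, hence $\F$-centric --- a contradiction with no Alperin--Goldschmidt tracking needed. Your version has an independent error: from $C_{\gamma_1(S)}(T)\leq T$ you deduce that $T/Z(S)$ is ``self-perpendicular'' and hence $T$ is abelian, but self-centrality only gives $(T/Z(S))^\perp \leq T/Z(S)$, i.e.\ co-isotropic, which does not imply abelian. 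To get Lagrangian you would also need the reverse inclusion, equivalently the bound $|T|\leq p^3$, which you do not establish at that point in the argument (the paper obtains $|T|\leq p^3$ only later, in the proof of Theorem~\ref{main}). Finally, the appeal to Alperin--Goldschmidt to spread ``self-centralizing'' across the $\F$-class of $T$ has the same unresolved difficulty as in your Case~2 of part (1).

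In short: your Case~1 of (1) and the ``$T\norm S$'' branch of (2) match the paper, but the heavy lifting you route through Alperin--Goldschmidt is precisely what the paper avoids by invoking \cite[Lemma 3.7]{GP} for (1) and a fully-normalized-conjugate argument for (2), and the steps you leave as ``an induction shows'' and the self-perpendicularity claim are genuine gaps, not merely omitted routine details.
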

\begin{proof} 
By Lemmas \ref{lem:T.not.center} and \ref{lem:T-in-W}
we have $Z(S) < T \subseteq \gamma_1(S) \cup C_S(Z_2(S))$. Thus \cite[Lemma 3.7]{GP} implies that either $\N_S(T) \leq \gamma_1(S)$ or $T \norm S$. If $T$ is not normal in $S$, then  $T \leq N_S(T) \leq \gamma_1(S)$ and since $\gamma_1(S)$ is extraspecial and $[\gamma_1(S),\gamma_1(S)] = Z(S) \leq T$ we deduce that $\gamma_1(S) = \N_S(T)$.

As for the second statement, aiming for a contradiction, suppose $T$ is $S$-centric, that is, $C_S(T) \leq T$. As $\F$ is saturated, if $T$ is fully normalized in $\F$ then it is $\F$-centric, contradicting Hypothesis \ref{hp:contra.max}. 
    Hence $T$ is $\F$-conjugate to a subgroup $T'$ of $S$ with $|N_S(T')| > |N_S(T)|$. Using Lemma \ref{lem:T.not.center} and the fact that $\gamma_1(S)$ is extraspecial, we get $[\gamma_1(S), \gamma_1(S)] = Z(S) < T$ and so $T \norm \gamma_1(S)$. Thus $N_S(T) = \gamma_1(S)$ and $T' \norm S$.
      Also, $|T|=|T'|$, so $T'$ is not a maximal subgroup of $S$ and we deduce that $T' \leq \gamma_1(S)$. As $\gamma_1(S)$ is extraspecial and $T$ and $T'$ are normal subgroups of $\gamma_1(S)$ having the same order, we deduce that  
      \[|C_S(T')| = |C_S(T)|.\]
Therefore       
      \[|C_S(T')| = |C_S(T)| \leq |T| = |T'|.\] 
      Now, since $T'$ is normal in $S$, which has maximal nilpotency class, we have $T'= \gamma_i(S)$ and $C_S(T') = \gamma_j(S)$ for some $i,j \geq 1$. Thus $C_S(T') \leq T'$. Now, $T'$ is $S$-centric and fully normalized in $\F$, and so $T'$ is $\F$-centric. However, this implies that $T$ is $\F$-centric as well, a contradiction. 
\end{proof}

\section{Proof of Theorems \ref{th:main} and \ref{main}}
We now focus on a Sylow $p$-subgroup of ${\rm G}_2(p)$, for $p\geq 5$, obtaining the final ingredients to prove Theorems \ref{th:main} and \ref{main}.

\begin{proof}[Proof of Theorem \ref{main}]
Aiming for a contradiction, suppose Theorem \ref{main} is false. Hence there are subgroups $Q,P,R \leq S$ and a  simple $\mathbb{F}_p \Out_\F(Q)$-module $V$ such that $P$ and $R$ are $\F$-centric, $Q$ and $T:=P\cap R$ are not $\F$-centric and the composition \begin{equation*}
S_{Q,V}(P) \xrightarrow{\Res_T^P} S_{Q, V}(T) \xrightarrow{\Ind_T^R} S_{Q, V}(R)	
\end{equation*}
is non-zero. Hence Hypothesis \ref{hp:contra.max} holds.

We first show that $ \N_S(T) = \gamma_1(S)$. By Lemma \ref{lem:T.not.S.centric} either $\N_S(T) = \gamma_1(S)$ or $T\norm S$. Aiming for a contradiction, suppose the latter holds.
By Lemma \ref{lem:Tnorm} we get $T=\gamma_i(S)$ for some $i\geq 2$ such that $\gamma_{i-1}(S)$ is an abelian subgroup of $\gamma_1(S)$. Since $|S|=p^6$ and $\gamma_1(S)$ is extraspecial, we deduce that  either $T=Z(S)$ or $T=Z_2(S)$, contradicting Lemmas \ref{lem:T.not.center} and \ref{lem:Tnorm}. Hence $T$ is not normal in $S$ and we conclude that $\N_S(T) = \gamma_1(S)$.

In particular $|T|\leq p^4$. Also, by Lemma \ref{lem:T.not.center} the group $T$ has order at least $p^2$. If $|T|=p^4$ then $P$ and $R$ are maximal subgroups of $S$. Since $S$ has maximal nilpotency class, the intersection of any pair of distinct maximal subgroups of $S$ coincides with the group $\gamma_2(S)$. Hence $T =\gamma_2(S)$, contradicting the fact that $T$ is not normal in $S$. Thus $p^2 \leq |T| \leq p^3$. 
By Lemma \ref{lem:T.not.S.centric}, the group $T$ is not $S$-centric, so if $|T|=p^3$ then it is not abelian.

Let's now consider the groups $P$ and $R$.  Recall that $P$ and $R$ are distinct $\F$-centric subgroups of $S$ that are non abelian by Lemma \ref{lem:basic.prop} (iv).
By Lemma \ref{lem:T.contains.centralizers} we have $Z(P)\leq T$ and $Z(R)\leq T$. Since $Z_2(S)\neq T$ and if $|T|=p^3$ then $T$ is not abelian, we deduce that $P,R \nleq C_S(Z_2(S))$.  

Let $U\in \{P,R\}$.
\begin{description}

\item[Claim 1] If $U\nleq \gamma_1(S)$ then $\gamma_3(S)\leq U \cap \gamma_1(S) < U$ and $|U|\geq p^4$. 

\emph{Proof:} Set $U_0 = U \cap \gamma_1(S)$. By \cite[Lemma 2.12]{pearls} we have $U_0 = \gamma_i(S)$ for some $i\geq 2$ and $[U \colon U_0]=p$. Since $|U|\geq p^3$, we deduce that $Z_2(S) = \gamma_4(S) \leq U_0$, and so $TZ_2(S) \leq U_0$. This proves $|U_0|\geq p^3$, and so $\gamma_3(S)\leq U_0 < U$ and $|U|\geq p^4$.

\item[Claim 2] If $U\leq \gamma_1(S)$ then either $U=\gamma_1(S)$ or $|U|=p^4$ and $T = Z(U)$ has order $p^2$.

\emph{Proof:} Since $U$ is $S$-centric nonabelian, we deduce that either $U=\gamma_1(S)$ or $|U|=p^4$ and $|Z(U)| =p^2$. In the latter case,  since $Z(U) \leq T$ and $|T|\leq p^3$, we deduce that $T$ is abelian and so $|T|=p^2$, that is, $T=Z(U)$.

\item[Claim 3]  If $U\leq \gamma_1(S)$ then $U=\gamma_1(S)$.

\emph{Proof:} Aiming for a contradiction, suppose $U<\gamma_1(S)$. Hence by Claim 2 $|U|=p^4$ and $T=Z(U)$. In particular, $T \norm U$ and $T < C_U(T)=U$, contradicting Lemma \ref{lem:T.abelian.normal}(i).

\end{description} 

Suppose first that both $P$ and $R$ are not contained in $\gamma_1(S)$. Then Claim 1 implies that $\gamma_3(S) \leq P \cap R =T$ and so $T=\gamma_3(S)$, contradicting the fact that $T$ is not normal in $S$. Thus, without loss of generality, we can suppose $P\leq \gamma_1(S)$. Hence by Claim 3 we have $P=\gamma_1(S)$ and  $R \nleq \gamma_1(S)$. Again by Claim 1, $\gamma_3(S) < R$. therefore $\gamma_3(S) \leq P\cap R = T$, a contradiction. This completes the proof.
\end{proof}

\begin{proof}[Proof of Theorem \ref{th:main}]
By Theorem \ref{main}, for any choice of the subgroups $Q, P$ and $R$ as in the statement of Theorem \ref{main}, the composition (\ref{composition}) is zero. Now Proposition \ref{prop:trunc} guarantees that the restriction $S_{Q,V}|_{\orb(\F^c)}$ is an $\F^c$-truncated Mackey functor for $\F$. Hence \cite[Theorem A]{DP} implies that 
\[{\lim\limits_{\longleftarrow}}_{\orb(\F^c)}^i {S_{Q,V}}^*|_{\orb(\F^c)} = 0\] for all $i > 0$. We conclude by \cite[Proposition 4.3]{DP}. 
\end{proof}

\bibliographystyle{alpha}
\bibliography{mybooks}

\end{document}